\newcounter{teoremaganso}
\renewenvironment{abstract}{\small\quotation\noindent
 {\bfseries \abstractname .}}{\endquotation \par}
\def\resetthefootnote{\renewcommand{\thefootnote}{\@arabic\c@footnote} }
\def\@principiremex#1{\trivlist
 \item[\hskip \labelsep{\bfseries #1\ \thetheo}]\ignorespaces}
\def\opar@principiremex#1[#2]{\trivlist
 \item[\hskip \labelsep{\bfseries #1\ \thetheo\ (#2)}]\ignorespaces}
\newcommand{\newTHEOremrom}[2]{\newenvironment{#1}{\refstepcounter{theo}\@ifnextchar[{\opar@principiremex
{#2}}{\@principiremex{#2}}
  }{\qedB\endtrivlist}}
\DeclareMathSymbol{\square}{\mathord}{AMSa}{"03}
\newcommand{\qedB}{\nopagebreak\hspace*{\fill}$\square$\par}
\newcommand{\Qed}{\nopagebreak\hspace*{\fill}{\vrule width6pt height6pt depth0pt}\par}
\newtheorem {theo} {Theorem} [section]
\newtheorem {theo2} {Theorem}
\newtheorem {problem} [theo2] {Problem}
\newtheorem {prop} [theo] {Proposition}
\newcommand{\N}{\ensuremath{\mathbb{N}}}
\newcommand{\Z}{\ensuremath{\mathbb{Z}}}
\newcommand{\R}{\ensuremath{\mathbb{R}}}
\newcommand{\C}{\ensuremath{\mathbb{C}}}
\definecolor{meu}{rgb}{0.95,0.95,0.95}
\newcommand{\caixa}[1]{\vspace{0.3cm}\noindent
\colorbox{meu}{%
 \parbox{0.99\textwidth} {\color{black}{
{#1}}}}\vspace{0.3cm}}
\begin{document}

\title[Some open problems]{Some open problems\\ in low dimensional dynamical systems}
\author{Armengol Gasull}
\address{Departament de Matem\`{a}tiques, Edifici Cc, Universitat Aut\`{o}noma de Barcelona, 08193 Cerdanyola del Vall\`{e}s (Barcelona), Spain.}
\address{Centre de Recerca Matem\`{a}tica, Edifici Cc, Campus de Bellaterra, 08193 Cerdanyola del Vall\`{e}s (Barcelona), Spain.}
\email{gasull@mat.uab.cat}

\date{}

\maketitle

\begin{abstract}
The aim of this paper is  to share  with the mathematical community
a list of 33 problems  that I have found along the years during my
research. I believe that it is worth to think about them and,
hopefully,  it will be possible either to solve some of the problems
or to make some substantial progress. Many of them are about planar
differential equations but there are also questions about other
mathematical aspects: Abel differential equations, difference
equations, global asymptotic stability, geometrical questions,
problems involving polynomials or some recreational problems with a
dynamical component.

\end{abstract}

\noindent {\sl  Mathematics Subject Classification 2010: 34C07;
37C27; 34D45; 37G35; 13P15}

\noindent {\sl Keywords: Limit cycle, Period function, Center,  Abel
differential equation,  Piecewise linear differential equation,
Global asymptotic stability, Fewnomials, Conjectures, Open problems.
}

\section{Introduction}

There are several famous well-known conjectures and open problems,
like for instance   Jacobian conjecture,  Riemman's conjecture,
$3x+1$ conjecture or Collatz problem,  Goldbach's conjecture, or
Hilbert XVI problem, that almost all mathematicians know. Also a
very interesting list of 18 open problems, covering many different
branches of mathematics,  has been  published by Smale, see
\cite{Sma1998}. The aim of this work is much more modest. I will
list several concrete problems that I have found along the years. I
hope that, at least for some of them, it is possible either to solve
or to make some substantial progress.

The problems will be classified in seven categories:  periodic
orbits, period function, piecewise linear systems, Markus-Yamabe and
La Salle problems, geometrical problems, questions involving
polynomials, and recreational questions with a dynamical flavour.
Next we briefly describe them but without precise definitions. In
the corresponding next sections they are contextualized and stated
with more precision.

In Section \ref{se:po} we will propose some questions about the
maximum number of limit cycles of some low dimensional differential
equations, including rigid systems, homogeneous type differential
systems, Li\'{e}nard systems,  Riccati and Abel differential equations,
and a new point of view of Hilbert's XVI problem. Some other related
questions considered in this section are on a second order singular
differential equation, about the maximum number of centers for
polynomial differential systems and on the characterization of some
rational periodic difference equations.

In Section \ref{se:pf} we propose several problems for the period
function of some families of planar systems: a Hamiltonian one, a
system with homogenous components, a third one about the maximum
number of critical periods for planar  polynomial  differential
systems, and we end with the problem proposed by Chicone about the
maximum number of critical periods for quadratic reversible centers
and with a related one about the period function of a family of
reversible equivariant planar differential systems.

Section \ref{se:pls} is devoted to planar piecewise linear systems.
We state some problems about  their number and type  of limit
cycles.

In Section \ref{se:my} we present some questions related with global
asymptotic stability: two problems inspired on the works of Markus,
Yamabe, and La Salle and a third one dealing with linear random
differential or difference equations.

In Section \ref{se:gp} we state three questions with a geometric
component. The first one is well-known and it is about triangular
billiards and the second one is  about the extension of classical
Poncelet's theorem for ellipses to more general algebraic ovals. The
third question is the Loewner's conjecture.

Section \ref{se:pol} includes several problems involving
polynomials. We start with a moments type problem, somehow related
with the Jacobian conjecture, we recall the counterexamples of
Kouchnirenko's conjecture about the number of solutions of
fewnomials systems and propose an alternative question, and we end
stating the Casas-Alvero's conjecture.

Finally, in Section \ref{se:con} we collect three known conjectures
with some dynamical flavour: the conjecture of multiplicative
persistence, the 196 conjecture and Singmaster's conjecture.

\section{Periodic orbits}\label{se:po}

The celebrated Hilbert XVIth problem, about the number of limit
cycles of planar polynomials differential systems, has been
extensively studied during the last century and also the beginning
of the current one, see for instance the surveys
\cite{Ily2002,Li2003}. In this section, we present some related
problems for some particular families of differential systems. We
hope that advancing in these simpler cases can give some light to
tackle the general question. Some related and complementary papers,
collecting also some open problems are
\cite{ChaGra2003,ChaSab1999,Gin2007,LliZha2020}. This section also
contains some  questions about periodic orbits on different
contexts.

\subsection{Low degree rigid systems.} \label{se:rig}

Rigid systems are planar systems such that in polar coordinates
their associated angular differential equation is  $\dot \theta =1$.
The origin is their only equilibrium point,  and their limit cycles,
if exist are all nested. Moreover, centers are also isochronous
centers. They were introduced by Conti (\cite{Con1994}) and
afterwards they have been studied by many authors. They write as
\begin{equation}\label{eq:rig}
\begin{cases}
\dot x=-y+xF(x,y),\\
\dot y=\phantom{-}x+yF(x,y),
\end{cases}
\end{equation}
where $F$ is an arbitrary smooth function. Moreover, when $F$ is a
polynomial of degree~$n,$ $F=F_0+F_1+\cdots+ F_n,$ where $F_j$ are
homogeneous polynomials of degree $j,$ and in polar coordinates they
write as
\begin{equation}\label{eq:pol}
\frac{{\rm d} r}{{\rm d} \theta}=r'=\sum_{j=0}^n F_j(\cos
\theta,\sin\theta)r^{j+1}.
\end{equation}
Notice that this last expression is a $2\pi$-periodic non-autonomous
differential equation of Abel type. Its positive  $2\pi$-periodic
solutions are precisely the periodic solutions of \eqref{eq:rig}.

It is not difficult to see that when $n=1,$ that is  $F=F_0+F_1,$
system \eqref{eq:rig} has not limit cycles. Let us prove this
assertion by contradiction. Let $\gamma$ be a periodic orbit of
system~\eqref{eq:rig}. This periodic orbit is transformed into a
positive $2\pi$ periodic solution of   the Riccati differential
equation \eqref{eq:pol}, $r=r(\theta).$ Dividing \eqref{eq:pol} by
$r^2$ and writing $F_1(x,y)=bx+cy$ we get that
\[
\frac{r'(\theta)}{r^2(\theta)}=\frac{F_0}{r(\theta)}
+b\cos\theta+c\sin\theta.
\]
By integrating between $\theta=0$ and $\theta=2\pi,$
\[
0=\frac1{r(0)}-\frac1{r(2\pi)}=\!\int_0^{2\pi}
\frac{r'(\theta)}{r^2(\theta)}\,{\rm d}\theta= \!\int_0^{2\pi}
\frac{F_0}{r(\theta)}\,{\rm d}\theta+
\int_0^{2\pi}\!\!\!\big(b\cos\theta+c\sin\theta\big)\,{\rm
d}\theta=\int_0^{2\pi}\frac{F_0}{r(\theta)}\,{\rm d}\theta.
\]
Therefore we get a contradiction, unless $F_0=0.$ Hence we have
proved that when $F_0\ne0$ system \eqref{eq:pol} with $F=F_0+F_1$
has not periodic orbits. When $F_0=0$ it can have periodic orbits,
but not limit cycles. This is so because, in this case, Riccati
equation \eqref{eq:pol} is of separable variables and it can be
easily integrated.

Equation \eqref{eq:pol} when $n=2$ is precisely an Abel differential
equation and it is the case we are interested. It writes as
\begin{equation}\label{eq:rig2}
\begin{cases}
\dot x=-y+x(a+bx+cy+dx^2+exy+fy^2),\\
\dot y=\phantom{-}x+y(a+bx+cy+dx^2+exy+fy^2).
\end{cases}
\end{equation} In \cite{GasProTor2005}, examples with two limit cycles are
given. For instance a way for obtaining two limit cycles is by a
degenerate Andronov-Hopf bifurcation, because the first Lyapunov
constants for system \eqref{eq:rig2} are
\[
V_1={\rm e}^{2\pi a}-1,\quad V_3=\pi(d+f),\quad
V_5=\pi\big((c^2-b^2)d-bce\big)/2.
\]
Moreover the system has a center if and only if $V_1=V_3=V_5=0.$

Associated to $F_2$ and following again \cite{GasProTor2005} we
define the discriminant $\Delta:=e^2-4df.$ Then it holds that when
$\Delta\le0$ system \eqref{eq:rig2} has at most one limit cycle and
when it exists it is hyperbolic. This is so because under this
hypothesis the coefficient  of $r^3,$  $F_2(\cos\theta,\sin\theta),$
of the Abel differential equation \eqref{eq:pol} when $n=2,$ does
not change sign and when $d^2+e^2+f^2\ne0$ is not identically zero.
Then, following \cite{GasLli1990,Lin1980} it holds that this Abel
equation has at most three periodic orbits, taking into account
their multiplicities. Finally, since $r=0$ is always one of these
periodic orbits and, by symmetry of the equation, if $r(\theta)$ is
one periodic orbit then $-r(\theta+\pi)$ it is also another one, we
get that equation \eqref{eq:pol}, when $n=2,$ has at most one
positive periodic orbit, which has multiplicity one. This fact
implies that when $\Delta\le0$ system~\eqref{eq:rig2} has at most
one (hyperbolic) limit cycle, as we wanted to prove.

Hence, the left open problem reduces to the case $\Delta>0.$ In this
case and without loss of generality,   parameter $f$ can be taken as
$0$ via a linear change of variables. Moreover, it is not
restrictive to take $d\in\{0,1\}.$

\caixa{
\begin{problem}
Consider the family of planar cubic rigid systems
$$
\begin{cases}
\dot x=-y+x(a+bx+cy+dx^2+exy),\\
\dot y=\phantom{-}x+y(a+bx+cy+dx^2+exy).
\end{cases}
$$
Is 2 its maximum number of limit cycles?
\end{problem}
}

\subsection{Systems with homogeneous components.}\label{se:hom}

We start presenting next result given in \cite{CimGasMan1997}, with
a slightly different proof.

\begin{theo}
Consider system
\begin{equation}\label{eq:hom}
\dot x=P_n(x,y),\quad \dot y=Q_m(x,y),
\end{equation}
where $P_n$ and $Q_m$ are  homogeneous polynomials of degrees $n$
and $m,$ respectively. If it has limit cycles then $n\ne m$ and both
$n$ and $m$ are odd. Moreover, in this case, there are polynomials
$P_n$ and $Q_m$ such that system \eqref{eq:hom} has at least
$(n+m)/2$ limit cycles.
\end{theo}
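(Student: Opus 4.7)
The plan is to pass to polar coordinates $x = r\cos\theta,\ y = r\sin\theta$, since the homogeneity of $P_n,Q_m$ interacts well with radial scaling, and then to split the argument according to the equality and parities of $n$ and $m$. If $n = m$, one gets $\dot r = r^n A(\theta)$ and $\dot\theta = r^{n-1} B(\theta)$ for trigonometric polynomials $A,B$, so $\mathrm{d}r/\mathrm{d}\theta = r\,A(\theta)/B(\theta)$ is separable and linear in $r$, with Poincar\'e return map $r_0\mapsto r_0\exp\bigl(\int_0^{2\pi}A/B\bigr)$; hence either every orbit is periodic or none is, and in neither case is there an isolated periodic orbit.

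Assume now $n\ne m$ with at least one of them even. If both are even, $P_n(-x,-y) = P_n(x,y)$ and $Q_m(-x,-y) = Q_m(x,y)$, so $X = (P_n,Q_m)$ is invariant under $\sigma(x,y) = (-x,-y)$, giving $\sigma_\ast X = -X$. A putative limit cycle $\gamma$ must enclose the (unique, generically) origin equilibrium; since $\sigma$ is area-preserving and $\sigma(\gamma)$ is a closed orbit nested with $\gamma$, equality of interior areas forces $\sigma(\gamma) = \gamma$. Reversibility then provides, for any orbit inside $\gamma$ with $\omega$-limit $\gamma$ and $\alpha$-limit at the origin, a $\sigma$-partner inside $\gamma$ whose asymptotic behaviour is reversed, contradicting the uniform stability of $\gamma$ on a $\sigma$-symmetric inside neighbourhood. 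If instead exactly one of $n,m$ is even, the central symmetry fails; I would substitute $u = r^{|n-m|}$ to rewrite $\mathrm{d}r/\mathrm{d}\theta$ as an Abel-type equation for $u(\theta)$ (in the spirit of the reduction in Section~\ref{se:rig}) and apply Bendixson--Dulac with a factor $h(x,y) = (x^2+y^2)^{\beta} g(\theta)$. Using Euler's identity one obtains
\[
\operatorname{div}(hX) = r^{n-1+\beta}\bigl[(n+\beta)(pg)\cos\theta - (pg)'\sin\theta\bigr] + r^{m-1+\beta}\bigl[(m+\beta)(qg)\sin\theta + (qg)'\cos\theta\bigr],
\]
with $p(\theta) = P_n(\cos\theta,\sin\theta)$ and $q(\theta) = Q_m(\cos\theta,\sin\theta)$, and the task is to choose $\beta,g$ so that both bracketed terms share a definite sign---this is where the different parities of $n-1$ and $m-1$ (which would cancel in the both-odd case) should give enough sign control.

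For the lower bound of $(n+m)/2$ limit cycles when $n \ne m$ are both odd, I would bifurcate from the integrable Hamiltonian system $\dot x = -y^n$, $\dot y = x^m$, whose Hamiltonian $H(x,y) = \frac{y^{n+1}}{n+1} + \frac{x^{m+1}}{m+1}$ (with $n+1,m+1$ both even) has a global period annulus around the origin. Writing $\dot x = -y^n + \varepsilon R_n(x,y)$, $\dot y = x^m + \varepsilon S_m(x,y)$ with $R_n,S_m$ homogeneous of degrees $n,m$, the first-order Melnikov function $M(h) = \oint_{H=h}R_n\,\mathrm{d}y - S_m\,\mathrm{d}x$ controls the bifurcating limit cycles. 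Exploiting the quasi-homogeneity $H(\mu^{n+1}x,\mu^{m+1}y) = \mu^{(n+1)(m+1)} H$, each monomial in $R_n$ or $S_m$ contributes a specific power $h^{\lambda}$ with exponent read off the monomial exponents; after discarding the terms whose integrals vanish by the reflection symmetries of $H$, one tunes the coefficients of $R_n,S_m$ so that $M(h)$ has $(n+m)/2$ simple positive zeros, each yielding a limit cycle for small $\varepsilon$ by Poincar\'e--Pontryagin.

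The main obstacle I foresee is the mixed-parity sub-case of the necessity direction: neither the linear-separable structure of $n=m$ nor the reversibility of the both-even case is directly available, so pinpointing the correct Dulac weight $(\beta,g)$---or an invariant measure / first-integral style obstruction closing the same gap---is the technical crux. A secondary, still delicate, point is the bookkeeping in the Melnikov argument: one must verify that, after the symmetry-induced cancellations, exactly $(n+m)/2$ independent powers of $h$ survive, so that the count is sharp.
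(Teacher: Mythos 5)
Your argument for the necessity direction has a genuine gap exactly where you flag it: the mixed\-/parity case. The Bendixson--Dulac route with a weight $h=(x^2+y^2)^{\beta}g(\theta)$ asks you to choose a single pair $(\beta,g)$ making two bracketed expressions, built from the \emph{arbitrary} restrictions $p(\theta)=P_n(\cos\theta,\sin\theta)$ and $q(\theta)=Q_m(\cos\theta,\sin\theta)$, simultaneously one-signed for all $\theta$; there is no reason such a choice exists uniformly in $P_n,Q_m$, and nothing in the different parities of $n-1$ and $m-1$ supplies the missing sign control. The paper closes all parity cases at once with a single index argument that your proposal never invokes: by homogeneity any nonzero equilibrium forces a full line of equilibria, which would have to cross any closed orbit, so a periodic orbit can only surround the origin as the unique equilibrium and hence the origin has index $1$; on the other hand the Eisenbud--Levine formula gives $\operatorname{ind}(0,0)\equiv nm \pmod 2$, so $nm$ must be odd and both degrees are odd. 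This replaces both your reversibility argument for the even--even case (which can be made to work, but is superfluous) and the unresolved mixed case. Your treatment of $n=m$ via the linear separable equation $\mathrm{d}r/\mathrm{d}\theta=rA(\theta)/B(\theta)$ is fine and equivalent to the paper's homothety observation, modulo noting that if $B$ vanishes there are invariant rays and no periodic orbits at all.

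The lower-bound half of your proposal is essentially the paper's own proof: the same Hamiltonian $H=x^{2\ell}/(2\ell)+y^{2k}/(2k)$ (your $m+1=2\ell$, $n+1=2k$ up to signs), the same first-order Melnikov function, the same use of reflection symmetry to kill half the monomial integrals, and the same quasi-homogeneous rescaling to read off powers of $h$. The bookkeeping you worry about does close: after setting $j=2i+1$ the surviving coefficients are $a_1,a_3,\dots,a_{2k-1}$ and $b_1,b_3,\dots,b_{2\ell-1}$, i.e.\ $k+\ell$ free parameters, and the rescaling $h=w^{2k\ell}$, $\rho=w^{2(k-\ell)}$ turns $M$ into $w^{2k\ell+k+\ell}\rho^{1-2\ell}$ times a polynomial in $\rho^2$ whose exponents run exactly over $0,1,\dots,k+\ell-1$ with independently tunable coefficients; hence $k+\ell-1=(n+m)/2$ simple positive zeros are attainable and Poincar\'{e}--Pontryagin yields that many limit cycles. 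So this half needs only the explicit computation, not a new idea, whereas the necessity half needs the index argument you are missing.
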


\begin{proof} Because $P_n$ and $Q_m$ are homogeneous, if
$(x_0,y_0)$ is a equilibrium point of \eqref{eq:hom} different of
the origin then, all the real line $\lambda(x_0,y_0),$
$\lambda\in\R,$ is full of equilibrium points. Therefore, if
\eqref{eq:hom} has some periodic orbit,  the origin must be the only
equilibrium point of the system. Moreover, by using
\cite{EisLev1977} we know that its index $\operatorname{ind}(0,0)$
satisfies
\[
\operatorname{ind}(0,0) \equiv nm \pmod{2}.
\]
It is well-known that if a periodic orbit  surrounds a unique
equilibrium point then its index must be 1. Hence,  $nm \equiv 1
\pmod{2}$ and as a consequence $n$ and $m$ must be odd as we wanted
to prove.

It is also well-known  that if $n=m$ then \eqref{eq:hom} can have
periodic orbits, but not limit cycles. This is so, because by
homogeneity, if $\gamma$ is a periodic orbit of the systems  all
orbits homothetic to $\gamma$ are as well periodic orbits and hence
$\gamma$  is not an isolated periodic orbit, see also Section
\ref{se:shc}.

Hence the first part of the theorem is already proved. To prove the
second part, recall that for general $\mathcal{C}^1$ perturbed
Hamiltonian systems,
\begin{equation}\label{eq:gen}
 \begin{cases}
     \dot x=\phantom{-}\dfrac{\partial H(x,y)}{\partial y}+\varepsilon R(x,y,\varepsilon),
     \\[10pt] \dot y=-\dfrac{\partial H(x,y)}{\partial x}
     +\varepsilon S(x,y,\varepsilon),
 \end{cases}
\end{equation}
where $\varepsilon$ is an small parameter, its associated
Melnikov--Poincar\'{e}--Pontryagin function is
\[
M(h)=\int _{\gamma(h)} S(x,y,0)\,{\rm d}x-R(x,y,0)\,{\rm d}y=\sigma
\iint_{G(h)} \frac{\partial R(x,y,0)}{\partial x} +\frac{\partial
S(x,y,0)}{\partial y} \,{\rm d}x{\rm d}y,
\]
where $\sigma$ is $\pm1$ according the orientation of the time
parameterization of $\gamma(h).$  Here, the curves $\gamma(h)$ form
a continuum of ovals contained in $\{H(x,y)=h,$ for $h\in
(h_0,h_1)\},$ and the second expression is only valid if for $h=h_0$
the oval reduces to a point and $G(h)$ is the region surrounded by
$\gamma(h),$ see for instance \cite{ChiLi2007,DumLliArt2006}. It is
known that each simple zero $ h^*\in (h_0,h_1)$ of $M$ gives rise to
a limit cycle of \eqref{eq:gen} that tends, when $\epsilon\to0,$ to
$\gamma(h^*).$

We write $n=2k-1\ge1$ and $m=2\ell-1\ge1,$ where  without loss of
generality $k> \ell,$ and consider,
\begin{equation}\label{eq:gen2}
 \begin{cases}
     \dot x=\phantom{-}y^{2k-1}+\varepsilon P_{2k-1}(x,y)=y^{2k-1}
     +\varepsilon  \displaystyle\sum_{j=1}^{2k-1} \frac{a_j}j y^{2k-1-j}x^j,
     \\[14pt] \dot y=-x^{2\ell-1}
     +\varepsilon Q_{2\ell-1}(x,y)=-x^{2\ell-1}+\varepsilon
     \displaystyle\sum_{j=1}^{2\ell-1} \frac{b_j}j x^{2\ell-1-j}y^j.
 \end{cases}
\end{equation}
Then  $H(x,y)=\frac{x^{2\ell}}{2\ell}+\frac{y^{2k}}{2k},$
$(h_0,h_1)=(0,\infty)$ and
\[
M(h)=\sum_{j=1}^{2k-1} a_j \iint_{G(h)}
y^{2k-1-j}x^{j-1}\,dxdy+\sum_{j=1}^{2\ell-1} b_j \iint_{G(h)}
x^{2\ell-1-j}y^{j-1}\,dxdy.
\]
By symmetry of the sets $G(h)=\{x^{2\ell}/(2\ell)+y^{2k}/(2k)\le
h\},$ when $j$ is even all the above integrals identically vanish.
So we can write $j=2i+1.$ Hence,
\[
M(h)=\sum_{i=0}^{k-1} a_{2i+1} \iint_{G(h)}
y^{2(k-i-1)}x^{2i}\,dxdy+\sum_{i=0}^{\ell-1}  b_{2i+1} \iint_{G(h)}
x^{2(\ell-i-1)}y^{2i}\,dxdy.
\]
We introduce $w$ such that $h=w^{2k\ell}.$ Then, by using the change
of variables $x=w^k X$ and $y=w^\ell X,$ we get that
\[
\iint_{G(h)} x^{2r}y^{2s}\,dxdy=  w^{2(k r+\ell s)+ k+\ell}
\iint_{G(1)} x^{2r}y^{2s}\,dxdy=:I_{r,s}w^{2(k r+\ell s)+k+\ell}.
\]
Hence,
\begin{align*}
M(h)&=\sum_{i=0}^{k-1}  a_{2i+1}I_{i,k-i} w^{2k\ell+(2i+1)(k-\ell)}
+\sum_{i=0}^{\ell-1}
b_{2i+1}I_{\ell-i,i}w^{2k\ell+(2i+1)(\ell-k)}\\&=
w^{2k\ell}\left(\sum_{i=0}^{k-1}  a_{2i+1}I_{i,k-i} \rho^{2i+1}
+\sum_{i=0}^{\ell-1}
b_{2i+1}I_{\ell-i,i}\rho^{-(2i+1)}\right)\\&=w^{2k\ell+k+\ell}\rho^{1-2\ell}\left(\sum_{i=0}^{k-1}
 a_{2i+1}I_{i,k-i} \rho^{2(\ell+i)} +\sum_{i=0}^{\ell-1}
b_{2i+1}I_{\ell-i,i}\rho^{2(\ell-1-i)}\right)\\&=:w^{2k\ell+k+\ell}\rho^{1-2\ell}\sum_{j=0}^{k+\ell-1}
c_j \big(\rho^2\big)^j,
\end{align*}
where $\rho=w^{2(k-\ell)}$ and $c_j$ are arbitrary constants, given
in terms of the parameters $a_r$ and $b_s.$ Therefore, taking
suitable values of these parameters we get any  polynomial of degree
$k+\ell-1$ in $\rho^2.$ Choosing it with all its roots  positive and
simple we obtain that $M$ has $k+\ell-1$ simple positive roots and
as a consequence a system  of the form \eqref{eq:gen2}, which
clearly belong to family \eqref{eq:gen}, such that for $\varepsilon$
small enough has $k+\ell-1=(n+m)/2$ limit cycles.
\end{proof}

From the above result a natural question is:

\caixa{
\begin{problem} Is $(n+m)/2$ the maximum number of limit cycles of
    $$
      \dot x=P_n(x,y),\quad \dot y=Q_m(x,y)
    $$
    where $n\ne m$ and $P_n$ and $Q_m$ are  homogeneous polynomials of odd
    degrees $n$ and $m,$ respectively?
\end{problem}}

A first challenge in  the above problem is to deal with the simplest
case, that corresponds to $n=1$ and $m=3.$

\caixa{
\begin{problem} (i) Consider the cubic family
    \begin{equation}\label{eq:1-3}
    \begin{cases}
    \dot x=ax+by,\\
    \dot y=cx^3+dx^2y+exy^2+fy^3.
    \end{cases}
    \end{equation}
Is $2$ its maximum number of limit cycles?

    (ii) Give a simple proof, if it is true, of the uniqueness of the limit
    cycle for equation
    \begin{equation}\label{eq:1-3p}
    \begin{cases}
    \dot x=y,\\
    \dot y=-x^3+dx^2y+y^3.\qquad
    \end{cases}
    \end{equation}
\end{problem}}

Because of the difficulty to deal with \eqref{eq:1-3} some efforts
have been spend with the even more concrete system \eqref{eq:1-3p}.
For it is known that:
\begin{itemize}
\item It has not limit cycles  for $d\ge0$ and $d<-2.679,$
see \cite{GasGia2006}.

\item It has at most one limit cycle
$ -2.381<d<0,$ see \cite{GasGia2006}.

\item It has at least one limit cycle for
$-2.110<d<0,$ see \cite{GiaGra2015}.

\item A numerically study seems to reduce the range of existence of limit cycles to $-2.198<d<0.$

\end{itemize}

The  results of the first two items are obtained by using suitable
Dulac functions, see next section for more details about this
approach. The system is also studied with the same tool for some
values of $d$ in \cite{CheGri2010}.

\subsection{Low degree classical Li\'{e}nard systems.}\label{se:lie}

Li\'{e}nard equations $\ddot x+f(x)\dot x+ x=0$ are a subject of
continuous study and for many functions $f$ present isolated
oscillations. Maybe the most famous  one is the  van der Pol
equation, for which $f$ is a cubic polynomial. These oscillations
can be seen as limit cycles of the associated planar system:
\begin{equation}\label{eq:lieg}
\begin{cases}
\dot x=y-F(x),\\
\dot y=-x,
\end{cases}
\end{equation}
with $F'(x)=f(x)$ and $F(0)=0.$

When $F$ is a polynomial of degree $n$ its maximum number of limit
cycles, say  $\operatorname{Lie}(n),$ is not known in general.
During many years people tried to prove the conjecture of Lins,
de~Melo and Pugh (\cite{LinMelPug1977}) that asserted that
$\operatorname{Lie}(n)=[(n-1)/2],$ where $[\,\,]$ denotes the
integer part function. This conjecture has been proved to  be false
for $n=7$ in \cite{DumPanRou2007} and later, counterexamples for any
$n\ge 6,$ with 2 more limit cycles that the conjectured number, have
been given in \cite{MaeDum2011}. These counterexamples were found by
studying slow-fast Li\'{e}nard systems. Nowadays, no upper bound for
arbitrary $n$ is neither known nor conjectured. For more detailed
information, see also the survey paper \cite{LliZha2017}.

In any case, in \cite{LinMelPug1977} it is proved that
$\operatorname{Lie}(2)=0$ and $\operatorname{Lie}(3)=1,$ and in
\cite{LiLli2012} that $\operatorname{Lie}(4)=1.$ In particular, the
proof of this last result is not easy at all. The first not known
number is $\operatorname{Lie}(5)\ge2.$

There is a classical tool, based on the construction of the
so-called Dulac functions that usually gives elegant  proofs of the
upper bound of the number of limit cycles. It is the
Bendixson--Dulac theorem. We state a particular version of it, which
is useful  in many cases to prove uniqueness (and hyperbolicity) of
limit cycles.

\begin{theo}[A very particular version of Bendixson--Dulac theorem, \cite{GasGia2006}]\label{th:bd} Let
$V:\R^2\to\R$ a $\mathcal{C}^1$    function such that $\nabla V$
vanishes on $\{V(x,y)=0\}$ at finitely many points  and the set
$\,\R^2\setminus \{V(x,y)=0\}$ has finitely many connected
components, all them are simply connected but eventually one, that
might have a hole (i.e., its fundamental group is $\mathbb{Z}$).
Assume there exists $s\in\R$ such that
\[ M_s=\frac{\partial V}{\partial x} P+\frac{\partial
V}{\partial y} Q+s \left(\frac{\partial P}{\partial
x}+\frac{\partial Q}{\partial y}\right)V
\]
does not change sign and vanishes only on a set of zero mesure  and,
moreover, that the set $\{V(x,y)=0\}$ does not contain periodic
orbits of \eqref{eq:bd}. Then, the $\mathcal{C}^1$ differential
system
\begin{equation}\label{eq:bd}
\dot x= P(x,y),\quad \dot y=Q(x,y),
\end{equation}
has:
\begin{itemize}
\item[(i)] not periodic orbits when either $s\ge0$ or no special region with a
hole exists,

\item[(ii)] at most one  periodic orbit  when $s<0$ that, when exists is a hyperbolic limit
cycle.
\end{itemize}
\end{theo}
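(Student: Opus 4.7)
The plan is to use a power of $V$ as a Dulac-type function and to reduce the statement to the classical Bendixson--Dulac theorem applied component by component. A direct computation yields the key identity
$$\operatorname{div}\bigl(V^{1/s}(P,Q)\bigr)=\frac{V^{1/s-1}}{s}\,M_s$$
on $\{V>0\}$, and the analogous formula on $\{V<0\}$ after replacing $V$ with $-V$. The upshot is that on each connected component $U$ of $\R^2\setminus\{V=0\}$ the divergence of the auxiliary vector field $|V|^{1/s}(P,Q)$ has a fixed sign (determined by those of $V$, $s$ and $M_s$) and vanishes only on a set of measure zero.

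Now let $\gamma$ be a periodic orbit of \eqref{eq:bd}. By hypothesis $\gamma\not\subset\{V=0\}$, and I would first show that $\gamma$ lies entirely in a single component $U$ of $\R^2\setminus\{V=0\}$: tangency of $(P,Q)$ to $\gamma$ gives $\oint_\gamma|V|^{1/s}(P\,dy-Q\,dx)=0$, which, combined with the constant-sign divergence on each subcomponent of the disk bounded by $\gamma$ and Green's theorem, rules out any transverse crossing of $\{V=0\}$ by $\gamma$. Once $\gamma\subset U$, the classical Bendixson--Dulac theorem with Dulac function $|V|^{1/s}$ forbids $\gamma$ if $U$ is simply connected, and yields uniqueness if $U$ is annular. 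This handles case (i) when no annular region is present, and the uniqueness part of (ii). The hyperbolicity in (ii) I would extract from
$$\oint_\gamma(P_x+Q_y)\,dt=\oint_\gamma\frac{M_s}{sV}\,dt,$$
which follows by writing $M_s/(sV)=\dot V/(sV)+(P_x+Q_y)$ along $\gamma$ and using $\oint\dot V/V\,dt=[\log|V|]_0^T=0$; the right-hand side is nonzero since its integrand has constant sign on $U$ and vanishes only on a null set.

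The remaining subcase of (i), with $s\geq 0$ together with an annular component, requires an extra argument, because a naive sign analysis allows the area-integral contributions from subdomains on opposite sides of $\{V=0\}$ (inside the disk enclosed by $\gamma$) to cancel. For $s=0$ one substitutes a monotonicity argument: $M_0=\dot V$ of constant sign forces $V$ to be monotone along orbits, and combining this with $\gamma\not\subset\{V=0\}$ and the measure-zero condition on $\{M_0=0\}$ rules out closed trajectories. For $s>0$ one exploits the fact that $|V|^{1/s}$ extends continuously (with value zero) across $\{V=0\}$, so Green's theorem can be applied on the full disk bounded by a hypothetical $\gamma$ and the constant sign of the divergence, tracked carefully across components, again yields the contradiction.

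The main technical obstacle is precisely the analytic handling of $|V|^{1/s}$ near $\{V=0\}$: for $s>0$ it is merely Hölder continuous there, while for $s<0$ it blows up. Justifying Green's theorem in these regimes, via exhaustion by subdomains avoiding $\{V=0\}$ and a controlled passage to the limit, is the delicate core of the argument; this is also the point at which the structural hypotheses on $\{V=0\}$ (only finitely many critical points of $V$, finitely many connected components of the complement, at most one non-simply-connected) are actually used.
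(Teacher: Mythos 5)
Your plan coincides with the proof the paper itself sketches: show that periodic orbits cannot cross $\{V=0\}$, apply Dulac's criterion on each connected component of $\R^2\setminus\{V=0\}$ with the Dulac function $|V|^{1/s}$, and treat $s=0$ separately via $M_0=\dot V$; the paper gives only this outline and defers the technical details (which you correctly identify, including the behaviour of $|V|^{1/s}$ near $\{V=0\}$) to \cite{GasGia2013}. The one step to tighten is hyperbolicity: the hypothesis that $\{M_s=0\}$ has zero measure does not by itself exclude that $M_s$ vanishes identically along the curve $\gamma$ (itself a null set), so concluding $\oint_\gamma M_s/(sV)\,dt\neq 0$ requires an additional word.
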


The idea of its proof when $s\ne0$ is to show that the possible
periodic orbits can not cut the set $\{V(x,y)=0\}$ and later to
apply the Dulac theorem to each of the  connected components of
$\R^2\setminus \{V(x,y)=0\}$ with the Dulac function $|V|^{1/s}.$
When $s=0$ it is easier to be proved, simply observing that
$M_0=\dot V.$ To see a detailed proof of a  more general version of
the above result, and several examples of application, see
\cite{GasGia2013} and their references. We detail a couple of
examples of application for  systems of the form \eqref{eq:lieg}.

Consider system \eqref{eq:lieg} with $F(x)=cx^3+x^5.$ To prove that
it has at most 1 limit cycle we will apply Theorem \ref{th:bd} with
$F(x,y)=y^2-F(x)y+x^2+2c/5$ and $s=-1.$ Then
$M_{-1}(x,y)=2x^2(10x^4+10cx^2+3c^2)/5.$ It is easy to see that
$M_{-1}(x,y)\ge0$ and vanishes only on the line $x=0.$ Moreover,
since $F$ is quadratic on $y,$ $\R^2\setminus \{V(x,y)=0\}$ has at
most one connected component that can have a hole. Hence, the
corresponding system \eqref{eq:lieg} has at most one limit cycle
which  is hyperbolic when it exists. In fact, the divergence of the
vector field associated to the system is $3cx^2+5x^4.$ Since when
$c\ge0$ it is always greater or equal  that zero,   in this case the
system has no limit cycles by the classical Bendixson theorem. When
$c<0$ it is not difficult to see that the limit cycle exists. A
similar approach can be used to prove the uniqueness and
hyperbolicity of the limit cycle when $F(x)=cx^{2k+1}+x^{2m+1},$ for
any natural numbers $k<m,$ see \cite{GasGia2002}.

In \cite{SanCon1964} it is proved that taking
$F(x)=\frac{x(1-cx^2)}{1+cx^2}$ with $c>0,$ system \eqref{eq:lieg}
has at most 1 limit cycle and that a limit cycle exists for some
values of $c.$ By using Theorem \ref{th:bd} with $V(x,y)=
y^2-F(x)y+x^2$ and again $s=-1$ a simple and algebraic proof, by
using the above theorem,  of the uniqueness and hyperbolicity of the
limit cycles is given in \cite{GasGia2013}. The theorem applies
because
\[
M_{-1}(x,y)=-\frac {4cx^4}{(1+cx^2)^2}\le0
\]
and it only vanishes on the line $x=0,$ and $\R^2\setminus
\{V(x,y)=0\}=\R^2\setminus\{(0,0)\}$ has only one connected
component, which has a hole. This last assertion follows because the
discriminant of $V$ with respect to $y$ is
\[
\operatorname{dis}_y(V(x,y))=F^2(x)-4x^2=-{\frac {{x}^{2} \left(
c{x}^{2}+3 \right)  \left( 3\,c{x}^{2}+1
 \right) }{ \left( c{x}^{2}+1 \right) ^{2}}}\le0
\]
and only vanishes at $x=0.$

In next problem we propose to apply this method for low degree
Li\'{e}nard systems.

\caixa{
\begin{problem}
Find a proof using Dulac functions that
$\operatorname{Lie}(3)=1$ and $\operatorname{Lie}(4)=1$.
\end{problem}}

We remark that a proof that $\operatorname{Lie}(2)=0,$ follows
easily from the above theorem. When $F(x)=ax+bx^2,$ if we consider
$V(x,y)={\rm e}^{-2by}$ and $s=1,$ it holds that $M_1(x,y)=-a {\rm
e}^{-2by}.$ Hence if $a\ne0$ this Li\'{e}nard system has not periodic
orbits. When $a=0$ it has a reversible center at the origin, so it
has periodic orbits but not limit cycles.

\subsection{Riccati and Abel differential equations.}

Abel differential equations appear in the study of some planar
vector fields, see for instance Section \ref{se:rig}, but they are
also interesting by themselves. In general, they write as
\begin{equation}\label{eq:abel}
\frac{{\rm d} x}{{\rm d} t}=A_3(t)x^3+A_2(t)x^{2}+A_1(t)x+A_0(t),
\end{equation}
where all the functions $A_j$ are $\mathcal{C}^1$ and $T>0$
periodic. We are interested on finding conditions for these
functions to control their number of $T$ periodic solutions. Usually
the $T$ periodic solutions that are isolated among all the $T$
periodic solutions are also called limit cycles.

It is remarkable that while when $A_3=0$ (the Riccati differential
equation) the maximum number of limit cycles is two, there is no
upper bound for the number of limit cycles for general Abel
differential equations \eqref{eq:abel}, even when the functions
$A_j$ are trigonometrical polynomials, see \cite{Lin1980}.

The upper bound for Riccati  differential equation follows for
instance from the fact that, on its interval of definition, the
solution of this differential equation, when $A_3=0$ and satisfying
$\varphi(0;\rho)=\rho$ is
\[
x=\varphi(t;\rho)=\frac{B(t)\rho+C(t)}{D(t)\rho+E(t)},
\]
 where $B,C,D,E$ are smooth functions that
depend on $A_j,j=0,1,2,$ see for instance \cite{Hil1997}. Hence, for
each fixed $t$, it is a M\"{o}bius map. Therefore  its number of
periodic solutions is given by the number of solutions of the
quadratic equation obtained from the condition
$\varphi(T,\rho)=\rho.$ Moreover, limit cycles correspond to
isolated solutions of the quadratic equation. Nevertheless we only
know how to obtain explicitly these four functions when a particular
solution of the Riccati equation is known, see for instance
\cite{CimGasMan2006}. Hence the following problem remains:

\caixa{
\begin{problem}
For a general $T$ periodic Riccati differential equation
    \begin{equation*}
    \frac{{\rm d} x}{{\rm d} t}=A_2(t)x^{2}+A_1(t)x+A_0(t)
    \end{equation*}
give effective criteria to know when it has a continuum of periodic
solutions, or it has exactly 2, 1 or 0 limit cycles.
\end{problem}}

Two useful results to obtain upper bounds on  the number of limit
cycles for Abel differential equation \eqref{eq:abel} are:

\begin{itemize}

\item[(i)] If $A_3\ne0$ and does not change sign, then the maximum
number of limit cycles is~3, see \cite{GasLli1990}.

\item[(ii)]   If $A_0=A_1=0$ and there exist $a,b\in\R$ such that
$aA_3+bA_2\ne0$ and does not change sign, then the maximum number of
limit cycles is 3, see \cite{AlvGasGia2007}. Notice that one of them
is $x=0.$

\end{itemize}

One of the simplest natural open questions for Abel equations is:

\caixa{
\begin{problem}

Consider the family of trigonometric Abel differential equations
$$
\frac{{\rm d}x}{{\rm d}t}=(a_0+a_1\sin t+ a_2\cos t)x^3+(b_0+b_1\sin t+b_2\cos
t) x^2.
$$
Is  3  its maximum number of $2\pi$ periodic   limit cycles?
\end{problem}}

Notice that for the above differential equation $x=0$ is always a
periodic solution. So, if it is isolated from other $2\pi$ periodic
solutions, it is one of these limit cycles. In \cite{AlvGasGia2007}
the problem is introduced, the above two general results are applied
to this particular case, obtaining some particular positive answers,
and the existence of examples with at least 3 limit cycles is
established. In \cite{Bra2009} further complementary results are
obtained. In particular, it is proved that the answer is yes when
$a_0b_0=0.$

In fact, the above problem can be extended to next one:

\caixa{
\begin{problem}
Given two integer numbers $p>q\ge 2,$ and $m,n\in\N,$ find the
maximum number of $2\pi$ periodic limit cycles for next family of
Abel type differential equations
$$
\frac{{\rm d}x}{{\rm d}t}=A_m(t)x^p+B_n(t) x^q,
$$
where $A_m$ and $B_n$ are $2\pi$-trigonometric polynomials with
respective degrees $m$ and $n.$
\end{problem}}

This question was introduced in \cite{AlvGasYu2008}, where some
lower bounds of the number  of limit cycles were given. A recent
improvement of these bounds has been obtained in
\cite{HuaTorVil2020}.

In fact, the class of Abel type equations is very interesting and
intriguing. For instance, the following result proved in
\cite{GasGui2006} extends the results of previous item (i). We
remark  that the result when $n$ is odd was also obtained in
\cite{Pan1998}.

\begin{theo} Consider the $\mathcal{C}^1,$ $T$ periodic Abel type
differential equation
\begin{equation*}
\frac{{\rm d} x}{{\rm d} t}= A_3(t)x^3+A_2(t)x^{2}+A_1(t)x+A_0(t),
\end{equation*}
where $n\ge3$ and $0\ne A_n$ does not change sign. Then:
\begin{itemize}

\item[(i)] If $n$ is odd, it has at most 3 limit cycles and the upper bound is sharp.

\item[(ii)] If $n$ is even, there is no upper bound for its number of
limit cycles.
\end{itemize}
\end{theo}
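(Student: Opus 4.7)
The plan is to address the two parts separately, with rather different techniques.

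For (ii), a perturbation argument suffices. The result of Lins \cite{Lin1980} provides, for each $k\in\N$, a $T$-periodic cubic Abel equation
$$
\dot x = A_3(t)x^3 + A_2(t)x^2 + A_1(t)x + A_0(t)
$$
with $A_3$ changing sign, possessing at least $k$ hyperbolic $T$-periodic limit cycles all contained in some bounded strip $\{|x|\le M\}$. Given an even $n\ge 4$, one adds the term $\varepsilon x^n$ with $\varepsilon>0$ small; the perturbation is $\mathcal{C}^1$-small on the compact set $\{|x|\le M+1\}$, so by structural stability of hyperbolic limit cycles all $k$ of them persist. The perturbed equation has leading coefficient $A_n\equiv\varepsilon$ of fixed sign and at least $k$ $T$-periodic limit cycles; since $k$ is arbitrary, no upper bound exists.

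For (i), the sharpness assertion is established by an explicit example of the prescribed degree exhibiting three hyperbolic limit cycles. For the upper bound I would argue by contradiction, assuming the existence of four distinct $T$-periodic solutions $x_1(t)<x_2(t)<x_3(t)<x_4(t)$; the ordering is preserved on $[0,T]$ by uniqueness of the initial value problem. Each positive $T$-periodic difference $u_{ij}=x_i-x_j$ satisfies
$$
\dot u_{ij}=u_{ij}R_{ij}(t),\qquad R_{ij}(t)=\sum_{k=1}^n A_k(t)\,h_k(x_i,x_j),
$$
with $h_k(a,b)=\sum_{\ell=0}^{k-1}a^\ell b^{k-1-\ell}$, and periodicity of $\log u_{ij}$ yields the six integral identities $\int_0^T R_{ij}(t)\,dt=0$.

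The core step I would attempt is a pointwise Lagrange reduction to the degree-$3$ case already covered by item~(i) of the preceding list. At each $t$, the unique cubic polynomial $\widetilde P(t,x)$ satisfying $\widetilde P(t,x_i(t))=\dot x_i(t)$ for $i=1,\ldots,4$ has leading coefficient
$$
\widetilde A_3(t) = \sum_{k=3}^n A_k(t)\,h_{k-3}\bigl(x_1(t),\ldots,x_4(t)\bigr)
$$
by the divided-difference formula (with $h_m$ the complete homogeneous symmetric polynomial), and the four functions $x_i(t)$ remain $T$-periodic solutions of the reduced cubic equation $\dot y=\widetilde P(t,y)$. The classical Gasull--Llibre theorem \cite{GasLli1990} then forces a contradiction provided one can show that $\widetilde A_3$ has fixed sign. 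This is precisely where the main obstacle lies: the lower-order coefficients $A_3,\ldots,A_{n-1}$ could in principle reverse the sign of $\widetilde A_3$, and the symmetric polynomials $h_m(x_1,\ldots,x_4)$ need not be positive when the $x_i$ are not all of the same sign. The plan to mitigate these obstructions combines a preliminary translation $x\mapsto x+C$ with $C>0$ large, which preserves $A_n$ and the class of $\mathcal{C}^1$ $T$-periodic equations of degree $n$ while pushing all four periodic solutions into the positive half-line (so that every $h_m(x_1,\ldots,x_4)$ becomes positive), together with suitable linear combinations of the six integral identities $\int_0^T R_{ij}=0$ used to absorb the lower-order contributions. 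Producing this explicit absorption, and in particular the combinatorial identities among the $h_k$'s that make it work uniformly in odd $n$, is the crux of the argument.
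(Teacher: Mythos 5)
The paper does not actually prove this theorem; it quotes it from \cite{GasGui2006} (and \cite{Pan1998} for the odd case), so your attempt has to stand on its own. Judged that way, part~(i) is not proved --- you say yourself that producing the ``absorption'' identities is the crux --- and under your reading of the statement it \emph{cannot} be proved, because that reading makes (i) false. You have interpreted the equation as the full polynomial $\dot x=\sum_{k=0}^{n}A_k(t)x^k$ (your formulas for $R_{ij}$ and for $\widetilde A_3$ sum over all $k$ from $3$ to $n$). But then your own argument for part~(ii) refutes part~(i): take a trigonometric cubic Abel equation of Lins Neto type with $k\ge 4$ hyperbolic limit cycles in a bounded strip and add $\varepsilon x^{5}$ instead of $\varepsilon x^{4}$; for small $\varepsilon>0$ you get an equation of odd degree $5$ whose leading coefficient $\varepsilon$ never changes sign and which still has $k>3$ limit cycles. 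The displayed equation in the statement is garbled (it shows degree $3$ while speaking of $A_n$); the class treated in \cite{GasGui2006} keeps only the single leading term $A_n(t)x^n$ together with the terms of degree at most two shown in the display, i.e. $\dot x=A_n(t)x^n+A_2(t)x^2+A_1(t)x+A_0(t)$. Any correct proof of (i) must use this restriction, and must use the parity of $n$ somewhere; your sketch of (i) uses neither.

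Within the correct class your Lagrange/divided--difference reduction is in fact the right mechanism, and it closes without any translation or absorption: the terms of degree $\le 2$ have vanishing third divided difference, so $\widetilde A_3(t)=A_n(t)\,h_{n-3}\bigl(x_1(t),\dots,x_4(t)\bigr)$ exactly, and the missing lemma is the positivity of the complete homogeneous symmetric polynomial of \emph{even} degree on $\mathbb{R}^4\setminus\{0\}$ (Hunter's theorem; for instance $h_{2m}(x)=\tfrac{1}{(2m)!}\,\mathbb{E}\bigl[(\textstyle\sum_i x_iZ_i)^{2m}\bigr]$ with $Z_i$ i.i.d.\ exponential). For $n$ odd, $n-3$ is even, $h_{n-3}>0$ at any four distinct points, $\widetilde A_3$ inherits the fixed sign of $A_n$, and \cite{GasLli1990} applied to the interpolating cubic equation gives the contradiction; for $n$ even, $h_{n-3}$ has odd degree and changes sign, which is exactly why (ii) can happen. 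The translation $x\mapsto x+C$ you propose is both unnecessary and harmful (it regenerates intermediate-degree terms and leaves the admissible class). Finally, for (ii) your perturbed equation has a sign-changing coefficient of $x^3$, hence lies outside the class of the theorem; the unbounded families must be built inside $\{A_nx^n+A_2x^2+A_1x+A_0\}$, as is done in \cite{GasGui2006}. The sharpness claim in (i) is likewise only asserted, not exhibited.
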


\subsection{A new Hilbert XVIth type problem.}\label{se:mon}

For each $m\in\N$ fixed,  consider the following family of
polynomials differential equations:

\begin{itemize}
\item Family  $\mathcal{M}_m$ given by
\[
(\dot x,\dot y)=\sum _{j=1}^m a_j X_j(x,y),\quad \mbox{with}\quad
X_j(x,y)=\begin{cases}\big(x^{n_j}y^{k_j},0\big),&  or,\\
\big(0,x^{n_j}y^{k_j}\big),&
\end{cases}
\]
where $(a_1,a_2,\ldots,a_m)\in\R^{m}$ and the couples
$(n_j,k_j)\in\N^2$ vary among all the possible values. Varying $m,$
this family covers all polynomial differential equations.
\end{itemize}

The letter $\mathcal M$ is chosen because the important point is to
count the  number of involved monomials. We define
$\mathcal{H}^M[m]\in\N\cup\{\infty\}$ to be the maximum number of
limit cycles that systems of the family $\mathcal{M}_m$ can have.
This point of view is similar to the one of counting the number of
real solutions of planar fewnomial systems, see Section
\ref{se:few}. In the recent preprint \cite{BuzCarGas2020} we prove:

\begin{theo} It holds that $\mathcal{H}^M[m]=0$ for $m=1,2,3$ and for  $m\ge 4,$
$\mathcal{H}^M[m]\ge m-3.$ Moreover, there exists a sequence of
values of $m$ tending to infinity such that $\mathcal{H}^M[m]\ge
N(m),$ where
\[
N(m)=\Big(\frac{(\frac{m-3}2)\log(\frac{m-3}2)}{\log
2}\Big)(1+o(1)).
\]
\end{theo}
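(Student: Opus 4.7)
The plan is to establish the three assertions separately: the vanishing for small $m$ by case analysis and integrability, the linear lower bound $\mathcal{H}^M[m]\ge m-3$ by a sequential Hopf bifurcation, and the superlinear lower bound by an iterated multi-scale construction.

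For $\mathcal{H}^M[m]=0$ with $m\le 3$, I would enumerate the possible distributions of the $m$ monomials between the two components. If all $m$ of them sit in a single component then one of $\dot x,\dot y$ is identically zero, one coordinate is preserved along orbits, and no periodic solutions exist. The remaining cases reduce, up to swapping $x$ and $y$, to either a separable system $\dot x=a_1 x^{n_1}y^{k_1},\ \dot y=a_2 x^{n_2}y^{k_2}$, which is explicitly integrable on each open quadrant and admits a first integral of logarithm-plus-power type; or a binomial-by-monomial system $\dot x=a_1 M_1+a_2 M_2,\ \dot y=a_3 M_3$, in which case $dx/dy$ is a Bernoulli equation in $x$ as a function of $y$ and also has a closed-form first integral on each quadrant. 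In every case the existence of a first integral on each connected component of the complement of the coordinate axes rules out isolated periodic orbits.

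For $\mathcal{H}^M[m]\ge m-3$ with $m\ge 4$, I would perturb the two-monomial linear center $\dot x=-y,\ \dot y=x$ by adding $m-2$ monomials suitably distributed between the two components. The Poincar\'e--Lyapunov quantities $V_1,V_3,V_5,\ldots$ are polynomials in the $m-2$ new coefficients, and by choosing the monomial exponents generically one arranges the first $m-3$ of them to be algebraically independent. A standard sequential degenerate Hopf bifurcation then produces $m-3$ small-amplitude hyperbolic limit cycles surrounding the origin. The only verification needed is the independence of the first $m-3$ Lyapunov quantities, which is a concrete and comparatively routine algebraic check.

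The superlinear lower bound is the main obstacle and is the substantive content of the theorem. Crucially, a direct single-polynomial Melnikov construction cannot give the stated $N(m)\sim (m/2)\log(m/2)/\log 2$ scaling, since by Descartes's rule a sparse polynomial with $k$ nonzero terms has at most $k-1$ positive real roots. I would therefore look for a multi-scale construction, such as a sparse slow-fast Li\'enard system
\begin{equation*}
 \dot x = y - F(x), \qquad \dot y = -G(x),
\end{equation*}
with $F$ and $G$ carefully designed sparse polynomials whose total monomial count is $O(m)$, in which a canard-type analysis in the spirit of De Maesschalck--Dumortier produces limit cycles at multiple nested scales. The number of nesting levels would scale logarithmically in the sparsity parameter $k=(m-3)/2$, thereby accounting for the extra $\log(m/2)$ factor over the linear bound. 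The truly hard point is to exhibit an explicit sparse monomial configuration that simultaneously achieves (i) total monomial count $O(m)$, (ii) $\Theta(\log(m/2))$ nesting levels, and (iii) geometric amplification of the cycle count at each level; this combinatorial-canard argument, compatible with the rigid constraint of counting monomials rather than degree, is where the passage from the naive linear bound to the stated superlinear one must take place, and is the genuine new content of the theorem.
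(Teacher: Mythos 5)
Your overall architecture matches the paper's (case analysis for $m\le 3$, an explicit sparse family for the linear bound, canard cycles for the superlinear one), but two of your three steps have genuine gaps. For $m\le 3$, integrability on each open quadrant does not finish the job: since $\dot x=ax^py^q$ is sign-definite on each open quadrant, any periodic orbit must meet the coordinate axes, which is exactly the set where your quadrant first integrals degenerate, so you would still have to control the transition maps across the axes. Moreover, in the key case $(\dot x,\dot y)=(ax^py^q,\ bx^iy^j+cx^ky^l)$ the orbit equation is $dy/dx=f(x)y^{\alpha}+g(x)y^{\beta}$ with two unrelated exponents; this is a Chini-type equation, not a Bernoulli one, and it has no closed-form first integral in general. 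This is precisely the family the paper singles out as requiring a genuine case-by-case study, carried out in the cited preprint rather than by explicit integration.

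For the bound $\mathcal{H}^M[m]\ge m-3$ your route is both different from the paper's and much harder than you suggest: proving that the first $m-3$ Lyapunov quantities of a sparse family are functionally independent for \emph{arbitrary} $m$ is not a ``routine algebraic check''; it is essentially the local Hilbert problem for that family and no general method certifies such independence. The paper sidesteps this entirely by quoting the Lins--de Melo--Pugh example: take the Li\'enard system $\dot x=y-F(x)$, $\dot y=-x$ with $F$ odd of degree $2m-5$; then $F$ contributes $m-2$ monomials, the system has $m$ monomials in total, and it admits $[(2m-6)/2]=m-3$ limit cycles. For the superlinear bound you correctly identify the slow-fast Li\'enard/canard mechanism and correctly observe that a one-scale Melnikov argument cannot beat the sparsity barrier, but you leave the construction entirely open; in the paper this part is not new content to be produced but a direct corollary of the canard-cycle lower bounds for generalized slow-fast Li\'enard systems in \cite{AlvColMaePro2020}, where the only additional work is to track the monomial count of the examples given there.
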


 The proof of the first part for $m=1,2,3$ follows by a case by case
 study. In particular, we prove that systems
$(\dot x,\dot y)= \big(ax^py^q, bx^iy^j+cx^ky^l\big),$ where
$(a,b,c)\in\R^3$ and $(p,q,i,j,k,l)\in\N_0^6,$ with
$\N_0=\N\cup\{0\},$ have no limit cycle. Currently we are working to
try to prove that $\mathcal{H}^M[4]=1$.

The fact that $\mathcal{H}^M[m]\ge m-3$  for $m>3$ is a
straightforward consequence of  known results about classical
Li\'{e}nard systems. In fact, an example with this number of limit
cycles is already provided by the one given in \cite{LinMelPug1977}
to prove that $\operatorname{Lie}(n)\ge[(n-1)/2],$ see Section
\ref{se:lie}, simply taking $F$ odd with $n=2m-5.$ Then, the Li\'{e}nard
system \eqref{eq:lieg} has $m$ monomials and $m-3$ limit cycles.

The second part is a direct corollary of the recent paper
\cite{AlvColMaePro2020}  where the authors study limit cycles for
generalized slow-fast Li\'{e}nard systems.

It is worth to mention that the celebrated examples of quadratic
systems that prove that $\mathcal{H}(2)\ge4$ are given by systems
with $m=8$ monomials and so they have $m-4$ limit cycles, see
\cite{CheArtLli2003,Per1984}. The slow-fast example of Li\'{e}nard
equation of degree 6 and 4 limit cycles, that gives a counterexample
of Lins, de~Melo and Pugh's conjecture, has also 8 monomials, see
\cite{MaeDum2011}. The cubic system given \cite{LiLiuYan2009} that
shows that $\mathcal{H}(3)\ge13$ has $m=9$ monomials and at least
$m+4$ limit cycles.

Under the light of the above results, some natural problems are:


\caixa{
\begin{problem}
(i) Find upper and lower bounds for $\mathcal{H}^M[m]$.

(ii) Find the minimal $m$  such that there exists a planar
polynomial differential system with $m$ monomials having at least
$m+1$ limit cycles: the simplest polynomial differential system with
more limit cycles than monomials.
\end{problem}}

\subsection{A second order differential equation} In
\cite{BraTor2010,Ure2016,Ure2017} there are several motivations to
study the $T$ periodic solutions of the second order singular, $T$
periodic differential equation $x^p(t) x''(t)=f(t),$ $0<p\in\R.$
From their results the following interesting problem can be
formulated:

\caixa{
\begin{problem} Let $f(t)$ be a continuous $T$ periodic function,
$f(t)\not\equiv 0$ and $0<p\in\R.$ Find necessary and sufficient
conditions on the function $f$ that ensure the existence of positive
$T$ periodic solutions of $x^p(t) x''(t)=f(t).$
\end{problem}}

Notice that a simple necessary condition is that $f$ changes sign,
because if $x(t)$ is a $T$ periodic positive solution, then
\[
\int_0^T \frac{f(t)}{x^p(t)}\,{\rm d}t=\int_0^T x''(t)\, {\rm
d}t=x'(T)-x'(0)=0.
\]
A second necessary condition is that $\int_0^T f(t)\,{\rm d}t<0.$
This can be easily proved by using integration by parts. If $x(t)$
is a $T$ periodic positive solution, then
\begin{align*}
\int_0^T f(t) \,{\rm d}t&= \int_0^T x^p(t)x''(t) \,{\rm d}t=
x^p(t)x'(t)\Big|_{t=0}^{t=T}-p\int_0^T x^{p-1}(t)(x'(t))^2 \,{\rm
d}t\\&=-p\int_0^T x^{p-1}(t)(x'(t))^2 \,{\rm d}t<0.
\end{align*}

In fact, in \cite{Ure2016} it is proved that if $p\ge2$ and $f(t)$
has  only nondegenerate zeroes, meaning  that it is continuously
differentiable, with nonvanishing derivative, in a neighbourhood of
each of its zeroes, the above two necessary conditions are also
sufficient. On the other hand, the same author proves in
\cite{Ure2017} that when $p=5/3$ there is a function $f,$ of class
$\mathcal{C}^\infty,$ satisfying both necessary conditions and such
that the corresponding differential equation has no positive
periodic solution.

\subsection{Number of centers} By Bezout's theorem, a planar polynomial differential systems of degree
$n>0,$ with finitely many equilibrium points, has at most $n^2$
equilibrium points. Moreover, at most $(n^2+n)/2$ can have index
$+1,$ see for instance \cite{CimLli1989,Kho1979}. Therefore, if we
define $\mathcal{C}_n$ as the maximum number of centers for this
class of polynomials systems, it holds that    $\mathcal{C}_n\le
(n^2+n)/2,$ because, remember that all centers have index $+1.$ It
is also clear that $\mathcal{C}_1=1.$

Moreover, by using the beautiful Euler--Jacobi's formula, it was
proved  in \cite{CimGasMan1993p} that for $n\ge2$  not all points of
index $+1$ can lie on the same algebraic curve of degree at most
$n-1.$ Since all centers are on the algebraic curve given by the
divergence of the vector field equal zero, which has at most degree
$n-1,$ we get that $\mathcal{C}_n\le (n^2+n)/2-1.$

Recall that this formula, for the  planar case, asserts that if a
polynomial system $P(x,y)=0,Q(x,y)=0,$ with $P$ and $Q$ with
respective degrees $n$ and $m,$ has exactly $nm$ solutions (hence
all them are finite and simple) then it holds that
\[
\sum_{\{(u,v)\,:\, P(u,v)=Q(u,v)=0\}} \frac
{R(u,v)}{\det(\operatorname{D}(P,Q))(u,v)}=0,
\]
for any polynomial $R(x,y)$ of degree smaller than $n+m-2,$ see for
instance \cite{GriHar1978}. Here, $\operatorname{D}(P,Q)$ denotes
the differential of the map $(P,Q).$

On the other hand, in \cite{CimGasMan1993} it is proved that planar
polynomial Hamiltonian differential systems of degree $n$ have at
most $[(n^2+1)/2]$ centers and that this upper bound is attained.
Hence  $[(n^2+1)/2]\le \mathcal{C}_n.$ In fact, examples of
Hamiltonian systems with this number of centers are not difficult to
be obtained. It suffices to consider
\[
\dot x =F(y),\quad \dot y=-F(x),\quad \mbox{with}\quad
F(u)=\prod_{j=1}^n (u-j).
\]
For these systems, centers and saddles are located like white and
black squares on an $n\times n$ chessboard.  In short, for $n\ge2$
it is known that
\[
\Big[\frac{n^2+1}2\Big]\le \mathcal{C}_n\le \frac{n^2+n}2-1.
\]
Hence $\mathcal{C}_2=2,$  $\mathcal{C}_3=5,$ and $8\le
\mathcal{C}_4\le 9.$

\caixa{
\begin{problem} Determine the maximum number of centers,
$\mathcal{C}_n,$ for planar polynomial differential systems of
degree $n\ge4.$
\end{problem}}

Once the number $\mathcal{C}_n$ is determined, it is also
interesting to know the different possible phase portraits that
systems having these maximal number of centers can have, see for
instance \cite{Blo2010}, where the Hamiltonian case when $n=3$ is
studied. One of the reasons is that these systems are good
candidates to have after perturbation different configurations with
many limit cycles.

\subsection{On some rational difference equations}

A classical problem for a given family of planar vector fields is
the so-called center-focus problem. It  consists on the distinction
between these two types of monodromic equilibrium points: center or
focus, or shortly into determining all centers of the family. Next
we present a similar question for some rational difference
equations. The goal here will be to determine all  difference
equations that are periodic. It is proved in \cite{CimGasMan2006}
that this periodicity is also strongly related with the complete
integrability of the discrete dynamical system associated to the
difference equation.

Let us introduce some definitions and state precisely the problem.
Consider the family of $k$-th order difference equations
\begin{equation}\label{eq:de}
x_{n+k}= \frac{A_0+A_1x_n+A_2x_{n+1}+\cdots+
A_kx_{n+k-1}}{B_0+B_1x_n+B_2x_{n+1}+\cdots+ B_kx_{n+k-1}},
\end{equation}
with $\sum_{i=0}^k A_i>0,$  $\sum_{i=0}^k B_i>0,$ $A_i\ge0,$
$B_i\ge0,$ and $A_1^2+B_1^2\ne0.$ For every initial condition
$(x_1,x_2,\ldots,x_k)\in(0,\infty)^n,$ they define a sequence
$\{x_i\}_{i\ge1}$ of positive real numbers. One of these difference
equations is called $p$-periodic if for all these initial conditions
it holds that $x_n=x_{n+p},$ for all $1\le n\in\N,$ and this value
$0<p\in\N$ is the smallest number with this property. That is, all
the sequences with positive initial conditions generated by
\eqref{eq:de} are $p$-periodic.

The following examples of $p$-periodic difference equations of the
form \eqref{eq:de} are known:
\begin{align}\label{eq:exa}
&x_{n+1}=x_n\quad\mbox{with}\quad  p=1, \quad\quad
x_{n+1}=\frac1{x_n} \quad\mbox{with}\quad p=2,\nonumber \\
&x_{n+2}=\frac{x_{n+1}}{x_n} \quad\mbox{with}\quad p=6,\quad
x_{n+2}=\frac{1+x_{n+1}}{x_n} \quad\mbox{with}\quad p=5,\\
& x_{n+3}=\frac{1+x_{n+1}+x_{n+2}}{x_n} \quad\mbox{with}\quad
p=8.\nonumber
\end{align}
Moreover, every $p$-periodic $k$-th order difference equation
produces in a natural way, and  for each $\ell\in\N,$ another one
which is $p\ell$-periodic and of $k\ell$-th order. For instance, the
one of second order given in \eqref{eq:exa} gives
\[
x_{n+2\ell}=\frac{x_{n+\ell}}{x_n} \quad\mbox{with}\quad
p=6\ell,\quad x_{n+2\ell}=\frac{1+x_{n+\ell}}{x_n}
\quad\mbox{with}\quad p=5\ell.
\]
Similarly, every $p$-periodic $k$-th order difference equation can
be unfold into a 1-parametric family with the same property. It
suffices to consider for any $n\in\N,$ $y_n=ax_n,$ with $0\ne
a\in\R.$ For instance, the above ones  give rise to
\[
y_{n+2\ell}=\frac{a y_{n+\ell}}{y_n} \quad\mbox{with}\quad
p=6\ell,\quad y_{n+2\ell}=\frac{a^2+a y_{n+\ell}}{y_n}
\quad\mbox{with}\quad p=5\ell.
\]
For short, all these new difference equations are called equivalent
to \eqref{eq:exa}.

\caixa{
\begin{problem} Are there rational difference equations of the form
\eqref{eq:de} that are not equivalent to the five ones given in
\eqref{eq:exa}?
\end{problem}
}

 The answer to the above question for $k\in\{1,2,3,4,5,7,9,11\}$ is
no, see \cite{CimGasMan2004}. We remark that when the condition of
non-negativeness of the coefficients  of \eqref{eq:de} is removed
much more periods and periodic difference equations appear. For
instance, when $k=1$ there are periodic M\"{o}bius maps with all the
periods. To see more information about related problems, see
\cite{CimGasManMan2016} and its references.

\section{Period function}\label{se:pf}

Let $\gamma(s),$ with $s$ in a real open  interval, be a smooth
parameterized continua of periodic orbits of a smooth planar
autonomous differential system. Usually, if the system is
Hamiltonian the parameter $s$ is taken to be the energy of the
system. When the continuum of periodic orbits ends in a critical
point, then the maximal set covered by them is called period annulus
of the point. The function that assigns to each $s$ the minimal
period of $\gamma(s)$ is called period function and it is usually
denoted by $T(s).$ The zeroes of $T'(s)$ are called critical periods
and determine them is a key point to know the behaviour of $T(s).$
To know properties of this function (monotonicity, number of
oscillations,\ldots) is interesting from a theoretical point of
view, as well as for applications for instance in physics or ecology
(\cite{ConVil2008,Rot1985,Wal1986}).

\subsection{A class of Hamiltonian systems.}

From the results of \cite{Col1996,Cop1993,Gas1997} it is known that,
on the period annulus of the origin, the period function has at most
one critical period for the family of Hamiltonian systems with
Hamiltonian
\[
H(x,y)= \frac12\big(x^2+y^2\big)+H_m(x,y),
\]
where $H_m(x,y)$ is  a  homogeneous polynomial of degree $m\ge3.$
Next question proposes to study if the same result holds for a more
general class of Hamiltonian systems.

\caixa{
\begin{problem} Consider a  Hamiltonian system with a center at the
origin and Hamiltonian
\[
H(x,y)= H_{2n}(x,y)+H_m(x,y), \qquad m>2n,\] where $H_{2n}$ and
$H_m$ are homogeneous  polynomials of degrees $2n$ and $m,$
respectively. Has the period annulus of the origin at most 1
critical period?
\end{problem}}

In \cite{Alv2017} it is proved that   the answer is  yes when $m\ge
4n-2$. So it remains to study the cases $2n<m<4n-2.$ Notice that the
simplest open question corresponds to the Hamiltonian $H(x,y)=
H_{4}(x,y)+H_5(x,y).$

\subsection{Systems with homogeneous components.}\label{se:shc}

We consider again systems
$$
\begin{cases}
\dot x=P_{2k+1}(x,y),\\
\dot y=Q_{2\ell+1}(x,y),
\end{cases}
$$
where $P_{2k+1}$ and $Q_{2\ell+1}$ are  homogeneous
polynomials of degrees $2k+1$ and $2\ell+1,$ respectively.

\caixa{
\begin{problem}
(i) Characterize the centers of the above family.

(ii) Which is the maximum number of oscillations of the period
function for the centers of the above family?
\end{problem}}

When $k=\ell$ both questions have a simple answer. The centers can
be characterized studying their expression in polar coordinates
$(x,y)=(r\cos\theta,r\sin\theta),$ because they are easily
integrable, see \cite{Arg1968}. In fact, they write as
\[
\dot r=f(\theta)r^{2k+1},\quad \dot\theta=g(\theta)r^{2k},
\]
where
\begin{align*}
f(\theta)&=P_{2k+1}(\cos\theta,\sin\theta)\cos\theta+Q_{2k+1}(\cos\theta,\sin\theta)\sin\theta,\\
g(\theta)&=Q_{2k+1}(\cos\theta,\sin\theta)\sin\theta-P_{2k+1}(\cos\theta,\sin\theta)\cos\theta.
\end{align*}
Hence the center conditions are that $g$ does not vanish (otherwise
the system would have invariant lines through the origin) and
\[
\int_0^{2\pi} \frac{f(\theta)}{g(\theta)}\,{\rm d}\theta=0,
\]
where we have obtained this last equality because the solution of
$\frac{dr}{d\theta}= \frac{f(\theta)}{g(\theta)}r$ with initial
condition $r(0)=s>0$ is
\[
r(\theta;s)= s \exp\left( \int_0^{\theta}
\frac{f(\psi)}{g(\psi)}\,{\rm d}\psi  \right).
\]
Hence by imposing that $r(0)=r(2\pi)$ the condition follows.

The period function can also be obtained from the above equations.
In fact,
\[
T(s)=\int_0^{2\pi} \frac 1{|g(\theta)|r^{2k}(\theta;s)}\,{\rm
d}\theta= \Bigg( \int_0^{2\pi} \frac {\exp\left( -2k\int_0^{\theta}
\frac{f(\psi)}{g(\psi)}\,{\rm d}\psi\right)}{|g(\theta)|}\,{\rm
d}\theta \Bigg) \frac 1{s^{2k}}=: \frac {A_k}{s^{2k}},
\]
and it is constant for  $k=0$ (these are the linear centers) and
decreasing for $k\ge1.$

In \cite{CaiLli2000} the authors studied all the phase portraits
when $k=0$ and $\ell=1,$ modulus the number of limit cycles, which
recall that at least is 2, see Section \ref{se:hom}.

\subsection{About the maximum number of critical
periods}\label{se:cp}

Let  $\mathcal{H}(n)$ denote the maximum number of  limit cycles
that planar
 polynomial systems  of degree  $n$ can  have.
 From \cite{ChrLlo1995} it is
known that
\[
\mathcal{H}(n)\ge K n^2 \log (n),\quad
\mbox{for some}\quad  K>0.\]  On the other hand, if
we denote as $\mathcal{T}(n)$ the maximum number of
 critical periods  that planar  polynomial
systems  of degree  $n$ can have, from the results of
\cite{GasLiuYang2010} it is  also known that
\[
\mathcal{T}(n)\ge \frac14 n^2.
\]
Recently, this lower bound has been essentially doubled in
\cite{Cen2021,MaeWyn2020}. A natural question is:

\caixa{
\begin{problem}
Is it true that $\mathcal{T}(n)\ge C n^2\log(n)$
for some $C>0$?
\end{problem}}

\subsection{Reversible quadratic systems}\label{se:rqs} Although there are same
subsequent results, in \cite{MarMarVil2006}  there is an excellent
source of information about one of the most famous open problems
about critical periods. It was proposed by Chicone in 1994 in a
review of MathSciNet and it reads as follows:

\caixa{
\begin{problem}
Consider the family of reversible quadratic centers
   \begin{equation}\label{eq:rqs}
    \begin{cases}
    \dot x=-y+xy,\\
    \dot y=x+Dx^2+Fy^2.
    \end{cases}
    \end{equation}
Is 2 its maximum number of critical periods?
\end{problem}}

The above systems are sometimes called Loud's systems, because this
author studied them in 1964, see \cite{ChaSab1999,MarMarVil2006}.

\subsection{Some  reversible equivariant planar differential systems}\label{se:equi}

Any planar analytic system, $(\dot x, \dot y)=(f(x,y),g(x,y)),$ can
be written in complex coordinates as $\dot z=F(z,\bar z),$ where
$z=x+{\rm i}y.$ Moreover, when the origin is a weak focus, after a
constant rescaling of time, it writes as $\dot z={\rm i}z+G(z,\bar
z),$ where $G$ starts at least with second order terms.

Recall that, in real coordinates, one of the simplest criteria to
know that the origin  is a center is the so-called Poincar\'{e}'s
reversibility criterion. It simply says that if a equilibrium point
(the origin) is monodromic and the system is invariant by the change
of variables and time $(x,y,t)\to (x,-y,-t)$ then it  is a center.
This is so because if $(x(t),y(t))$ is a solution of the system,
then the same happens for $(x(-t),-y(-t))$ and by the monodromy
condition and the uniqueness of solutions, both trajectories
intersect and, hence, they coincide. This proves that this solution
is a periodic orbit which is symmetric with respect the line $y=0.$
Notice that in complex variables this criterion works when the
differential equation is invariant by the change of variables and
time $(z,t)\to (\bar z,-t).$ Simply by considering symmetries with
respect arbitrary straight lines passing through the origin we
obtain the following well-known general result: if the origin  of a
differential equation $\dot z=F(z,\bar z)$ is a monodromic critical
point and, for some $\alpha\in\R,$ this equation is invariant by the
change of variables and time $(z,t)\to({\rm e}^{{\rm i}\alpha} \bar
z,-t),$ then the origin is a (reversible) center.

If we consider the origin of $\dot z=F(z,\bar z)$ to be a weak focus,  and
we write this equation as
\[
\dot z={\rm i}z+\sum_{m+n\ge 2}  A_{m,n}z^m\bar z^n,\quad A_{m,n}\in\C,
\]
then the condition for this equilibrium point to be a reversible
center is simply that there exists some $\alpha\in\R$ such that
$A_{m,n}=-\bar {A}_{m,n} {\rm e}^{{\rm i}(1-m+n)\alpha},$ for all
$m,n\in\N.$

Another  remarkable class of planar systems are the so-called
$\Z_k$-equivariant differential equations, see for instance
\cite[Sec. 7]{Li2003} and their references. They are differential
equations $\dot z=F(z,\bar z)$ that are invariant by a rotation
through $2\pi/k$  about the origin, or in other words, such that the
change of variable $z\to {\rm e}^{{\rm i}\beta}z,$ for
$\beta=2\pi/k, k\in\N,$ leaves them invariant. For these
differential equations, the phase portrait on each sector centered
at the origin and width  $2\pi/k,$  is repeated $k$ times.

Consider the following family of polynomial  $\Z_k$-equivariant differential equations
\begin{equation}\label{eq:equi}
\dot z= {\rm i} z+(z\bar z)^n z^{k+1},
\end{equation}
with $n\in\N$ and $k$ a positive integer. It has  a reversible
center at the origin because of Poincar\'{e}'s extended result with
$\alpha=\pi/k.$ We are interested on the behaviour of the period
function associated to this center. Notice that when $n=0$ the
differential equation is holomorphic and so it has an isochronous
center at the origin (\cite{ChaSab1999}).

\begin{prop} Consider the period function associated to the origin for \eqref{eq:equi}.
Then its behaviour and number of critical periods coincide with the
one of the period function of the origin of the quadratic reversible
center \eqref{eq:rqs} with $F=1+D$ and
$D={-k}/{(2(k+n))}\in\left[-1/ 2,0\right).$
\end{prop}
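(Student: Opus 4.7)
The plan is to move both centers into polar coordinates and identify the associated non-autonomous orbit equations via a single explicit change of variables, after which the two period functions differ only by a smooth strictly increasing reparametrization. Writing $z = r{\rm e}^{{\rm i}\theta}$ in \eqref{eq:equi} yields
\[
\dot r = r^{2n+k+1}\cos(k\theta), \qquad \dot\theta = 1 + r^{2n+k}\sin(k\theta),
\]
while a direct computation converts \eqref{eq:rqs} with $F=1+D$ into
\[
\dot r_L = (1+D)\,r_L^{\,2}\sin\theta_L, \qquad \dot\theta_L = 1 + D\,r_L\cos\theta_L.
\]

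First, I introduce $\phi := k\theta + \pi/2$ (so $\sin(k\theta) = -\cos\phi$ and $\cos(k\theta) = \sin\phi$) together with $R := \beta^{-1} r^{\,2n+k}$ for a positive constant $\beta$ still to be fixed. A short computation shows that the orbit equation of \eqref{eq:equi} becomes
\[
\frac{{\rm d}R}{{\rm d}\phi} = \frac{(2n+k)\beta}{k}\cdot\frac{R^{\,2}\sin\phi}{1 - \beta R\cos\phi}.
\]
Matching this literally with the Loud orbit equation ${\rm d}r_L/{\rm d}\theta_L = (1+D)r_L^{\,2}\sin\theta_L/(1+Dr_L\cos\theta_L)$ imposes simultaneously the two conditions $\beta = -D$ and $(2n+k)\beta/k = 1+D$, whose unique solution is $D = -k/(2(k+n)) \in [-1/2,0)$, $\beta = k/(2(k+n))$, and $F = 1+D$ — exactly the parameters in the statement.

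Second, the $\Z_k$-equivariance of \eqref{eq:equi} forces every periodic orbit around the origin to be invariant under the rotation $z\mapsto{\rm e}^{{\rm i}2\pi/k}z$: the rotated orbit is again a periodic orbit, and since orbits of the annulus are uniquely indexed by the rotation-invariant enclosed area, the rotation must fix each of them individually. Hence $r(\theta+2\pi/k;s) = r(\theta;s)$ for all $\theta$, so $R(\phi;s)$ is $2\pi$-periodic in $\phi$. Using $\dot\theta = 1 - \beta R\cos\phi = 1 + DR\cos\phi$, the period function of \eqref{eq:equi} reads
\[
T(s) = \int_0^{2\pi}\frac{{\rm d}\theta}{1 + D\,R(\phi)\cos\phi};
\]
the substitution ${\rm d}\phi = k\,{\rm d}\theta$ together with the $2\pi$-periodicity of the integrand in $\phi$ collapses the image interval $[\pi/2,\,2\pi k+\pi/2]$ to $k$ copies of $[0,2\pi]$, giving
\[
T(s) = \int_0^{2\pi}\frac{{\rm d}\phi}{1 + D\,R(\phi;s)\cos\phi} = T_L\bigl(\beta^{-1}s^{\,2n+k}\bigr),
\]
where $T_L$ denotes the period function of \eqref{eq:rqs} with $F=1+D$ (evaluated on the Loud orbit through $(R,\phi)=(\beta^{-1}s^{\,2n+k},\pi/2)$; different choices of Loud transversal differ by a further smooth monotone reparametrization). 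Since $s\mapsto \beta^{-1}s^{\,2n+k}$ is a smooth strictly increasing diffeomorphism of $(0,\infty)$, the critical periods of $T$ are in bijection with those of $T_L$ with multiplicities preserved, and the qualitative behaviour (monotonicity, number and type of oscillations) of the two period functions coincide.

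The step requiring the most care is the simultaneous matching of coefficients: the two algebraic conditions $\beta=-D$ and $(2n+k)\beta/k=1+D$ must be compatible, and they do pin down $\beta>0$ and $D\in[-1/2,0)$ uniquely to the values in the proposition, which is the conceptual heart of the identification. Once this matching is set up, the transfer of $2\pi$-periodicity via the $\Z_k$-symmetry and the collapse of the $\phi$-integral by periodicity are both routine, and the desired identity $T(s)=T_L(\beta^{-1}s^{\,2n+k})$ follows immediately.
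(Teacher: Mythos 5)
Your proposal is correct and follows essentially the same route as the paper: the key step in both is the transformation $R=\mathrm{const}\cdot r^{2n+k}$, $\Theta=k\theta$ (your $\phi$ differs only by a phase shift and your $\beta$ replaces the paper's linear rescaling $x=-(1+b)Y$, $y=-(1+b)X$), identifying the system with the Loud center $F=1+D$, $D=-k/(2(k+n))$, followed by the $\Z_k$-equivariance argument equating the full period with $k$ sector transit times. Your version, matching the non-autonomous orbit equations and writing the period integrals explicitly, is if anything slightly more careful than the paper's about why each periodic orbit is individually rotation-invariant and why the reparametrization $s\mapsto\beta^{-1}s^{2n+k}$ preserves critical periods.
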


\begin{proof} Equation \eqref{eq:equi} in polar coordinates $z=r{\rm
e}^{{\rm i}\theta}$ writes as
\[
\dot r=  r^{2n+k+1}\cos(k\theta),\quad \dot \theta=1 +
r^{2n+k}\sin(k\theta).
\]
Taking $R=r^{2n+k},$  $\Theta=k\theta,$ and reparametrizing the time
by the constant factor $k,$ the above system of equations is
converted into
\[
R'= b R^2\cos\Theta,\quad \theta'=1 + R\sin\Theta,
\quad\mbox{where}\quad b=1+\frac{2n}k.
\]
Introducing again real coordinates $X+{\rm i}Y=R{\rm e}^{{\rm
i}\Theta}$ this last system of equations writes as
\[
X'=-Y+bX^2-Y^2,\quad Y'=X+(1+b)XY.
\]
Now, taking $x=-(1+b)Y,$ $y=-(1+b)X,$ and a change of sign of the
time, we arrive to
 \begin{equation*}
    \begin{cases}
    \dot x=-y+xy,\\
    \dot y=x-\frac{1}{1+b}x^2+\frac b{1+b}y^2,
    \end{cases}
    \end{equation*}
which is precisely a system of the form \eqref{eq:rqs} with $F=1+D$
and $D=-1/{(1+b)}\in\left[-1/2,0\right).$

Finally, notice that the period of a periodic orbit surrounding the
origin for this last system is proportional to the time spend by a
periodic orbit of system \eqref{eq:equi} for going from $\theta=0$
to $\theta=2\pi/k.$ Since the system is $\Z_k$-equivariant, the
total period of this periodic orbit is $k$ times this last time, and
the result follows.
\end{proof}

Notice that when $n=0$ we recover one of the quadratic isochronous
centers, $(D,F)=(-1/2,1/2),$ see \cite{ChaSab1999}. Hence we have
reduced our problem to a similar one, but for the quadratic
reversible centers \eqref{eq:rqs} on the line $D-F+1=0$ and
$-1/2<D<0.$ Unfortunately, despite all the efforts done to study
this quadratic family, the behaviour of the period function on this
line is not yet know, although it is believed that it is monotonous
decreasing, see \cite{MarMarVil2006}.  Hence the following question
arises:

\caixa{
\begin{problem} Is the period function associated to the period annulus of the
origin of the differential equation $\dot z= {\rm i} z+ (z\bar z)^n
z^{k+1},$ with $n$ and $k$ a positive integers, monotonous
decreasing?
\end{problem}}

In fact, the above differential equation has other centers whose
period functions also deserve to be studied.

\section{Piecewise linear systems}\label{se:pls}

In non-smooth dynamics the differential equations appearing in the
simplest models are piecewise linear. Moreover, the discontinuity
curve is often given by a straight line.  These models have
attracted the attention of many scientists not only because its
simplicity, but also for the accuracy of the results obtained by
using them, compared  with the real observations, see more details
for instance  in  \cite{Aca2011, Bro2017, Kun2000}. We present a
couple of questions concerning their number of limit cycles.

\subsection{Algebraic limit cycles and related  questions.}

In this section we  compare some results about limit cycles for
quadratic systems with similar ones for piecewise linear systems
with a straight line of separation to highlight the parallelism
between both settings.

The existence of examples with 4 limit cycles for quadratic systems
has been already revealed in Section \ref{se:mon} and an example
with 3 limit cycles for piecewise linear systems is given in
\cite{LliPon2012}. We remark that when we consider limit cycles for
piecewise linear systems we only refer  to  crossing limit cycles, see
\cite{Ber2008,Fil1988}. This means that the two sides of the limit
cycle cut transversally the line of discontinuity and at these
points of cutting both vector fields point to the same half-plane,
see Figure \ref{fi:lc}. In other words, the crossing limit cycles
never follow the discontinuity line, avoiding the so-called sliding
motion, see again Figure \ref{fi:lc}.

Recall that a limit cycle of a smooth differential system is called
algebraic it it is an oval of an irreducible algebraic curve. The
degree of the limit cycle is the one of the curve. Similarly, a
(piecewise crossing) algebraic limit cycle for a piecewise linear
system is given also by a topological oval such that all its points
are contained,  on each of the sides of the discontinuity,  in an
irreducible algebraic curve in any of the two sides. Then the degree
of this limit cycle is a couple $(m,n)\in\N^2$ being each one of
these numbers the degrees of each one of the invariant algebraic
curves.

\begin{figure}[h]
    \centering\includegraphics{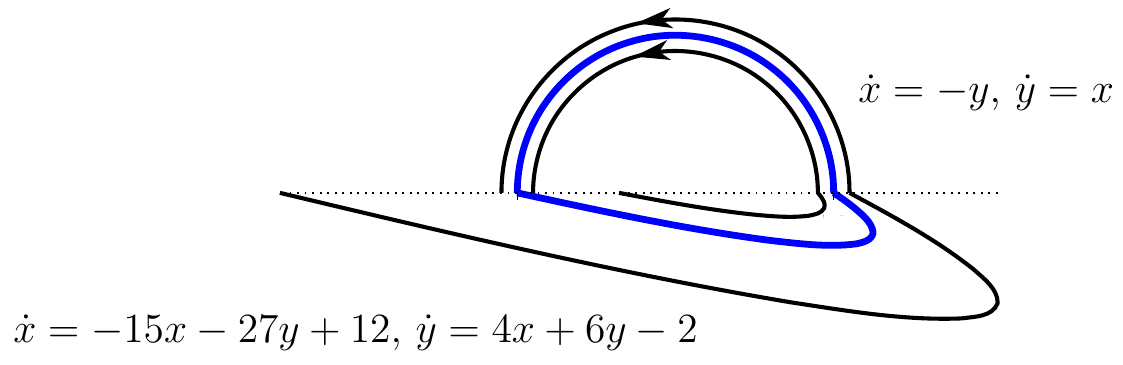}\caption{Example of algebraic (crossing) limit cycle of a piecewise
        linear system.}\label{fi:lc}
\end{figure}

In Figure \ref{fi:lc} an example of piecewise algebraic limit cycle of
degree $(2,2)$ for a piecewise linear system is showed, see
\cite{BuzGasTor2018}. It is formed by a piece of the parabola
$(x+3y)^2-3y-1=0$ and a piece of the circumference $x^2+y^2-1=0.$ We
remark, that although algebraic limit cycles with sliding do exist,
see for instance \cite{BuzGasTor2018}, we do not consider them.

The results about algebraic limit cycles for quadratic systems have
been obtained along the years in several papers, see for instance
\cite{ChrLliSwi2005} and the references therein. Until today,
irreducible algebraic limit cycles of degrees 2,4,5 and 6 are known
and it is also proved that there are not limit cycles of degree 3.
It is not known if these degrees are the only possible ones.

 On the other hand,  in \cite{BuzGasTor2018} it is proved that for piecewise linear
 systems, given any couple of natural numbers $(m,n)\in\N^2,$ with $n\ge2$ and
$m\ge2,$ there are algebraic limit cycles of degree $(m,n).$
Moreover, when $n=m$ it is also known that the two pieces of the
piecewise algebraic limit cycle do not correspond to a single
algebraic curve. In that paper also appear examples with  2
algebraic and hyperbolic limit cycles and examples with a  double semi-stable
algebraic  limit cycle.

Examples of quadratic systems with 2 critical periods are given for
the quadratic reversible centers \eqref{eq:rqs} introduced in
Section \ref{se:rqs}. See also \cite{GraVil2010} for other examples.
We do not know that the number of critical periods with piecewise
linear systems has been studied when the separation curve is a
straight line. The case where this curve is more general is studied
in \cite{WanYan2020}.

The table in next problem collects the above results and shows the
best known lower bound for the objects described in the left column.
When a question mark appears it means that it is not proved that the
given lower bound is the actual value. It is believed that the table
should be as it is but without question marks, but as we have
already explained, the only known result is that algebraic and
non-algebraic limit cycles never coexist  for piecewise linear
systems with a straight line of separation (\cite{BuzGasTor2018}).

\caixa{
\begin{problem} Improve this table:
\begin{center}
\begin{tabular}{||c|c|c|}
\hline
& Quadratic sys. & Piecewise linear sys.\\
\hline \hline  Limit cycles (l.c.)  & 4? & 3? \\\hline Algebraic
 limit cycles& 1?& 2?
\\\hline Non hyperbolic algebraic l.c.  & 0?&
1?
\\\hline Coex. of algebraic and non-algebraic  l.c.& NO?&
NO
\\
\hline Critical periods& 2?&?\\
\hline
\end{tabular}
\end{center}
\end{problem}}

\vspace{0.2cm}

\subsection{Another Hilbert's XVI type problem.}

Let $\mathcal{L}(n)$ denote the maximum number of (crossing) limit
cycles of planar piecewise linear differential systems with two
zones separated by a branch of an algebraic curve of degree $n$.  A
branch is an unbounded curve diffeomorphic to $\R$ and that defines
a closed set. We also stress that although the commonly used name is
linear, indeed both vector fields are affine. In principle, although
it seems improvable, we admit that some of the numbers
$\mathcal{L}(n)$ could be infinity.
 Recall that $\mathcal{H}(n)$ denotes the maximum number of  limit cycles
that planar polynomial systems of degree  $n$ can  have. With these
notations in mind we propose the following problem.

\caixa{
\begin{problem}
Improve, if possible, the lower bounds of this table:
\begin{center}
\begin{tabular}{||c|c||}
\hline
Polynomial case& Linear piecewise case\\
\hline \hline $\mathcal{H}(2)\ge4$&
$\mathcal{L}(1)\ge3$\\
\hline $\mathcal{H}(3)\ge13$&
$\mathcal{L}(2)\ge4$\\
\hline $\mathcal{H}(n)\ge K n^2 \log (n)$&
$\mathcal{L}(n)\ge[n/2]$\\
\hline
\end{tabular}
\end{center}
\end{problem}}

\vspace{0.2cm}

The lower bounds for the values of $\mathcal{H}(n)$ given in the
above table have already appeared in this paper, see Sections
\ref{se:mon} and \ref{se:cp}.  Recently, very good lower bounds for
$\mathcal{H}(n)$ and $n$ small are given in \cite{ProTor2019}. For
instance, $\mathcal{H}(4)\ge 28$ or $\mathcal{H}(5)\ge 37.$

The lower bounds for $\mathcal{L}(n)$ and $n=1,2$  are given in
\cite{LliPon2012} and \cite{GasTorZha2020}, respectively. In
\cite{BasBuzLliNov2019} it is proved that $\mathcal{L}(3)\ge 7$ and
in the recent preprint \cite{And2020} that $\mathcal{L}(3)\ge 8.$
Also in \cite{GasTorZha2020} the general lower bound for
$\mathcal{L}(n)$ given above is proved with the aim of showing that
$\mathcal{L}(n)$ tends to infinity when $n$ does. We believe that
there is room for improving it. Next, we include an idea of its
proof.

Define $f_n(x,\varepsilon)=\varepsilon T_n(x)$ with $\varepsilon>0$
suitable  small, where $T_n(x)$ is the  Chebyshev
    polynomial  of the first kind, i.e. for $|x|\le1,$
$T_n(x)=\cos(n\arccos x),$ and for $|x|>1,$ its analytic extension.
It is known that $T_n(x)$ is a polynomial of degree $n$ and all its
roots are real and in $[-1,1].$

The curves of degree $n,$
    $y=f_n(x,\varepsilon)$, have a single  branch and separate the  plane in two zones,
$\Omega^+$ when $y\ge f_n(x,\varepsilon)$ and $\Omega^-$ when $y\le
f_n(x,\varepsilon)$. We consider  the piecewise linear differential
systems
\begin{equation}\label{eq:2}
(\dot x,\dot y)=\begin{cases}
(x-4y-2,\frac 12 x-y),\quad &\mbox{on}\quad \Omega^+,\\
(-y+1,x),\quad &\mbox{on}\quad \Omega^-.
\end{cases}
\end{equation}
Some easy calculations show that the first integrals of each one of
the linear systems are
\[
H^+(x,y)=8y+x^2-4xy+8y^2\quad \text{and} \quad H^-(x,y)=-2y+x^2+y^2,
\]
respectively. If instead of the separation curve
$y=f_n(x,\varepsilon)$  we consider the separation line $y=0$, all
the solutions are   periodic orbits because $H^{\pm}(x,0)=x^2.$ The
important point is that the ones that pass trough the points $(\pm
x_k,0),$ where $x_k\ne0$ is a zero of $T_n(x),$ are the ones that
remain as  limit cycles,  for $\varepsilon$ small enough, see
Figure~\ref{fi:che}.

More specifically, let $m=[(n-2)/2]$, and $\pm x_{0}, \ldots, \pm
x_{m}$ be the $2(m+1)$ zeros of $f_n(x,\varepsilon)$ which are not
zero. Then
\[
x_k=\cos\left(\frac{2 k+1}{2n}\pi\right), \qquad k=0,1,\ldots,m.
\]
Then, for each $k\in \{0,1,\ldots,m\}$ and $\varepsilon$ small
enough,
$$
\Gamma_k:=\{(x,y)|\ H^+(x,y)=H^+(P_{\pm k}),\, y\ge 0\} \,\cup
\{(x,y)|\ H^-(x,y)=H^-(P_{\pm k}),\, y\le 0\},
$$
is a periodic orbit of our piecewise system \eqref{eq:2}, where $P_{\pm
k}=(\pm x_{ k},0)$ for $k\in\{0,1,\ldots,m\}$.

\begin{figure}[h]
\centering\includegraphics{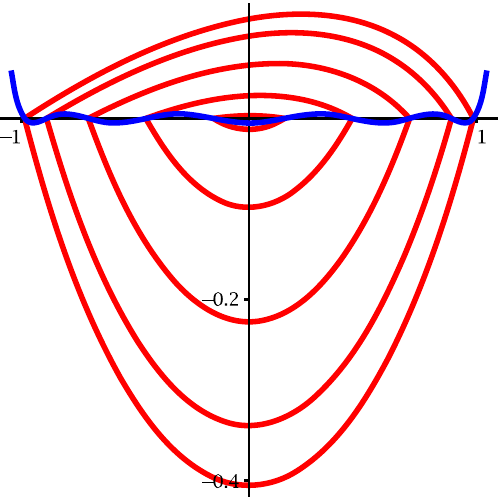} \caption{Separation curve defined
by a Chebyshev polynomial of degree 10 and 5 limit
cycles.}\label{fi:che}
\end{figure}

To prove that $\Gamma_k$ is a  hyperbolic limit cycle we compute the
derivative of the Poincar\'e map, which is a composition of two maps
and prove that it is not 1. This can be done by using  the nice
formula (\cite{AndVitKha1966})
\begin{equation*}
\Pi'(0)=\frac{ \langle X(0),(\gamma_0'(0))^\perp\rangle }{ \langle
X(T),(\gamma_1'(0))^\perp\rangle}
\exp\left(\int_0^T\operatorname{div} X(\varphi(t))\,{\rm d}t\right),
\end{equation*}
where $\langle \cdot,\cdot \rangle$ is the inner product of two
vectors, the superscript $\perp$ denotes the orthogonal of a two
dimensional vector, that is $(u,v)^\perp=(-v,u),$ and $\gamma_0(s)$
and $\gamma_1(s)$ are the local expressions of two transversal
sections $\Sigma_0$ and $\Sigma_1$ to a vector field $X$, and $T$ is
the time moving from $P=\gamma_0(0)$ to $\Pi(P)=\gamma_1(0),$ see
Figure \ref{fi:2}.

\begin{figure}[h]
    \centering\includegraphics{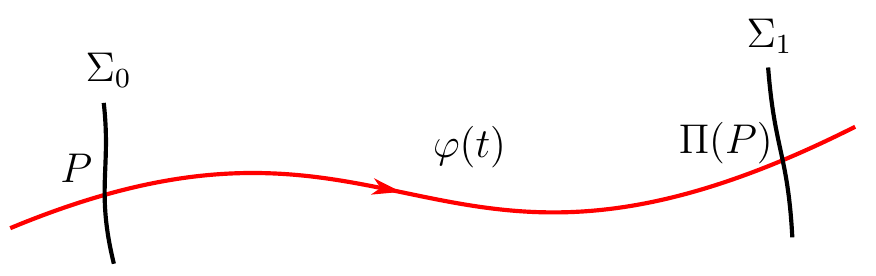}\caption{Poincar\'e map.}\label{fi:2}
\end{figure}

\section{On global asymptotic stability}\label{se:my}

We will present three problems about global asymptotic stability of
dynamical systems.

\subsection{A Markus-Yamabe problem for differential equations}

Markus-Yamabe conjecture for ordinary differential equations was
stated by L.~Markus and H.~Yamabe in 1960, and it said:

\begin{quote}
 Let $\dot x=F(x),$ $x\in\mathbb{R}^n,$ be a smooth
differential equation  such that $F(0)=0$ and  for all $x\in
\mathbb{R}^n,\,$  all the eigenvalues of  the matrix
$\mathrm{D}F(x)$ have   negative real part.  Then the origin is
globally asymptotically stable.
\end{quote}

Nowadays it is known that it is  true in dimensions 1 (it has a very
simple proof)  and~2 (see \cite{Fes1995,Glu1995,Gut1995}), and it is
false in higher dimensions. A smooth counterexample in dimension 4
was given in 1996 in \cite{BerLli1996}. It was obtained by
perturbing a continuous linear piecewise differential system defined in $\R^4$
having a hyperbolic periodic orbit and  so it also has a periodic
orbit. Polynomial counterexamples in dimension 3 and higher  were
given in 1997 in \cite{Cim1997}. These counterexamples are
\[
\dot x= -x+z_1(x+yz_1)^2,\quad \dot y=-y-(x+yz_1)^2,\quad \dot
z_i=-z_i,
\]
$i=1,2,\ldots, n-2.$ It can be seen that at any point all the
eigenvalues of the differential matrix $\mathrm{D}F$  are $-1,$ and that it has the
particular solution
\[(x,y,z_1,\ldots,z_{n-2})=\big(18 {\rm e}^t,-12 {\rm e}^{2t}, {\rm
e}^{-t},\ldots, {\rm e}^{-t}\big).\] Notice  that this solution
tends to infinity when $t$ increases. Therefore,  it appears the
following natural question:

\caixa{
\begin{problem}
Are there smooth vector fields in $\mathbb{R}^3$ under the
hypotheses of the Markus--Yamabe's conjecture and  having
periodic orbits?
\end{problem}}

\subsection{A Markus--Yamabe/La Salle  problem for DDS}

J.~P.~La Salle in 1976  proposed some  possible sufficient
conditions  for discrete dynamical systems with a fixed point, $
x_{m+1}=F(x_m),\,x\in \mathbb{R}^n, $ to be globally asymptotically
stable, see \cite{Las1976}. Some of them are discrete versions of
the Markus--Yamabe conditions given in previous section and have been
studied in \cite{CimGasMan1999,CimGasMan2001}. Two of these
conditions are:
\begin{align*} &\mbox{(C$_1$) For all }x\in
\mathbb{R}^n,\, \rho\left(\mathrm{D}F(x)\right)<1,\\
&\mbox{(C$_2$) For all }x\in \mathbb{R}^n,\,
\rho\left(\left|\mathrm{D}F(x)\right|\right)<1,
\end{align*}
where $\rho$ is the spectral radius of the differential matrix and,
given a square matrix $A=\big(a_{i,j}\big)$,  $|A|$ denotes the new
matrix with all its entries $|a_{i,j}|.$ It is known that
$\rho(A)\le \rho(|A|),$ see \cite{Gan1959}. Hence Condition~C$_2$ is
stronger than~C$_1,$ because $\rho(|A|)<1 \Longrightarrow \rho(A)<1,$
but it can be easily seen that $\rho(A)<1\,\, \not\!\!\Longrightarrow
\rho(|A|)<1.$

In fact, in \cite{CimGasMan1999} the authors already gave an example
due to W. Szlenk of a rational map satisfying condition~C$_1,$ for
$n=2,$ for which the origin is not globally asymptotically stable,
because it has a 4-periodic orbit. Similarly, polynomial maps
satisfying condition~C$_1,$ for $n\ge 3,$ for which the origin is
not globally asymptotically stable are presented in \cite{Cim1997}.
These maps have orbits that tend to infinity. The following question
remains open:

\caixa{
\begin{problem} Let $F:\mathbb{R}^2\rightarrow \mathbb{R}^2$ be a smooth map having a fixed point and
such that \[ \rho\left(\left|\mathrm{D}F(x)\right|\right)<1,\quad
\mbox{for all}\quad x\in\mathbb{R}^2.
\]
Is it true that the fixed point is globally asymptotically stable?
\end{problem}}

\subsection{Random linear differential or difference equations}

Given a $n$-th order linear homogeneous differential equation it is
natural to wonder which is the probability of the zero solution of
being a global attractor. Let us formalize this question.

Consider for instance the $3$-rd order linear differential equation
\begin{equation*}
A x'''(t)+Bx''(t)+Cx'(t)+Dx(t)=0,
\end{equation*} where  $A,B,C,D$ are real continuous random variables. It is natural to
require that all these  random variables are  independent and
identically distributed (i.i.d.\!). Also it seems reasonable to
impose that they are such that the random vector $(A,B,C,D)$ has
uniform distribution in $\mathbb{R}^4$. But such a  distribution is
impossible for unbounded probability spaces. Anyway, let us see that
there is a natural election for it.

It is clear that the solutions of the above differential equation do
not vary if we multiply the  equation by a positive constant. This
means that in the space of parameters, $\R^4,$ all the differential
equations with parameters belonging to the same half-straight line
passing through the origin are the same. Hence, we can ask for  a
probability distribution density~$f$ of the coefficients such that
the random vector
\begin{equation}\label{e:randomvector}
\left(\frac{A}{S}\,,\frac{B}{S}\,,\frac{C}{S}\,,\frac{D}{S}\right),
\quad\mbox{with}\quad S=\sqrt{A^2+B^2+C^2+D^2},
\end{equation}
has a uniform distribution on the sphere ${\mathbb S}^3 \subset
\R^4,$ that is a compact set. In \cite{CimGasMan2021} it is proved
the following result:

\begin{theo}
Let $X_1,X_2,\ldots,X_n$ be i.i.d.\! one-dimensional real random
variables with a continuous positive density function $f$. The
random vector
$$
\left(\dfrac{X_1}{S}, \dfrac{X_2}{S},\ldots,\dfrac{X_n}{S}\right),
\quad\mbox{with}\quad S=\Big(\sum_{i=1}^{n} X_i^2\Big)^{1/2},
$$
has a uniform distribution  in $\mathbb{S}^{n-1}\subset\mathbb{R}^n$
if and only if each $X_i$ is a normal random variable with zero
mean.
\end{theo}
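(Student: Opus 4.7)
The plan is to treat the two implications separately, with the converse being substantially more delicate.

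For the ``if'' direction, if $X_1,\ldots,X_n$ are i.i.d.\ $N(0,\sigma^2)$, the joint density is $(2\pi\sigma^2)^{-n/2}e^{-\|x\|^2/(2\sigma^2)}$, which is manifestly $O(n)$-invariant. Pushing forward by the radial projection $x\mapsto x/\|x\|$ yields an $O(n)$-invariant probability measure on $\mathbb{S}^{n-1}$, which must therefore be the uniform one.

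For the converse, I would pass to hyperspherical coordinates $x=r\omega$ with $r=\|x\|$ and $\omega\in\mathbb{S}^{n-1}$. In these coordinates the joint density becomes $r^{n-1}\prod_{i=1}^n f(r\omega_i)$, and the hypothesis that $X/\|X\|$ is uniform on the sphere is equivalent to
$$
I(\omega):=\int_0^\infty r^{n-1}\prod_{i=1}^n f(r\omega_i)\,{\rm d}r=\frac{1}{|\mathbb{S}^{n-1}|}\qquad\text{for every }\omega\in\mathbb{S}^{n-1}. \qquad(\ast)
$$
The strategic aim is to upgrade $(\ast)$ to the pointwise claim that, for each fixed $r>0$, the product $\prod_{i=1}^n f(r\omega_i)$ is independent of $\omega$, i.e.\ that $F(x):=\prod_i f(x_i)$ is radial. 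Once radiality is available, writing $\sum_i\log f(x_i)=H(\|x\|^2)$ and differentiating with respect to $x_j$ gives $(\log f)'(x_j)=2x_j H'(\|x\|^2)$; freezing the other coordinates, the right-hand side varies with $x_j^2$ while the left depends only on $x_j$, which forces $H'$ to be constant. Hence $\log f(x)=cx^2/2+\text{const}$, and integrability together with positivity force $c<0$, so $f$ is the density of a centered normal variable.

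To derive the radiality of $F$ from $(\ast)$, I would differentiate $(\ast)$ along the infinitesimal rotation in each coordinate plane $(x_i,x_j)$. The tangent vector to $\mathbb{S}^{n-1}$ at $\omega$ corresponding to such a rotation is $\omega_j e_i-\omega_i e_j$, and differentiating the constancy of $I(\omega)$ along it yields
$$
\int_0^\infty r^n\big[\omega_j f'(r\omega_i)f(r\omega_j)-\omega_i f(r\omega_i)f'(r\omega_j)\big]\prod_{k\neq i,j}f(r\omega_k)\,{\rm d}r=0
$$
for all $i\neq j$ and all $\omega\in\mathbb{S}^{n-1}$. Specialising $\omega$ to lie in each coordinate $2$-plane reduces these to one-parameter scalar integral equations involving only $f$ and $f'$.

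The main obstacle I foresee is that first-order-in-angle information alone is not enough: a direct computation shows that the Gaussian perturbation $f=f_G+\varepsilon(x^2-\sigma^2)f_G$ preserves $(\ast)$ to first order in $\varepsilon$ uniformly in $\omega$, while its second-order correction is a nontrivial function of $\omega$. Closing the argument thus requires exploiting the full nonlinear structure of $(\ast)$, and I would try one of: (a) iterating the tangential derivatives above and combining them with the factorisation $F=\prod_i f(x_i)$ to close a system of ODEs for $f$; (b) applying a Mellin transform in $r$ to $(\ast)$, turning each ray integral into a product of one-dimensional Mellin transforms of $f$ with arguments prescribed by $\omega$, and reading off a functional equation whose only positive continuous solutions are centered Gaussians; or (c) using $(\ast)$ directly to show that all moments of $f$ agree with those of some centered normal and then invoking the moment-determinacy of the normal law.
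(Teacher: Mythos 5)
First, a remark on the ground truth: the paper does not actually prove this theorem; it is quoted from \cite{CimGasMan2021}, so there is no in-paper argument to compare yours against. Judged on its own, your proposal establishes only the easy half. The ``if'' direction (rotational invariance of the joint Gaussian density pushes forward to the unique $O(n)$-invariant probability on $\mathbb{S}^{n-1}$) is complete and correct, and your reduction of the converse to condition $(\ast)$ is also correct, as is the Maxwell--Herschel step deducing Gaussianity once $F(x)=\prod_i f(x_i)$ is known to be radial (though note that $f$ is only assumed continuous, not differentiable, so that step should be run through the multiplicative Cauchy equation $G(u+v)G(0)=G(u)G(v)$ obtained by specialising coordinates, rather than by differentiating $\log f$).

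The genuine gap is the implication $(\ast)\Rightarrow F$ radial, which is not an intermediate simplification but is \emph{equivalent} to the theorem itself: radiality of $F$ trivially implies $(\ast)$ by integrating along rays, and the theorem asserts the converse. Everything you write about this implication is a description of strategies, not a proof: the tangential-derivative identities are correctly derived but never exploited to a conclusion, and your own observation that the perturbation $f=f_G+\varepsilon(x^2-\sigma^2)f_G$ kills the first variation of $I(\omega)$ uniformly in $\omega$ shows precisely that the linearised system you set up cannot close the argument. The three exit routes (a), (b), (c) are left entirely unexecuted, and it is not clear that any of them works as stated; for instance, the Mellin transform of $(\ast)$ in $r$ does not factor the ray integral into a product of one-dimensional Mellin transforms of $f$, because $\prod_i f(r\omega_i)$ is a product over coordinates at the \emph{same} radial variable, not a multiplicative convolution. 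As it stands the converse direction is a plan with an honestly acknowledged hole at its centre, and the statement cannot be considered proved.
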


Hence it is natural to  consider linear random homogeneous
differential equations of order $n$
\begin{equation}\label{eqdiflineal}
A_n x^{(n)}(t)+ A_{n-1} x^{(n-1)}(t)+ \cdots + A_2 x''(t)+A_1
x'(t)+A_0 x(t)=0,\end{equation} where all $A_j$ are
 i.i.d.\ random variables with $N(0,1)$ distribution.

To know the probability that the zero solution is globally
asymptotically stable is equivalent to know the probability, say
$p_n,$ that all the roots of its associated random characteristic
polynomial
$$
Q(\lambda)=A_n\lambda^{n}+A_{n-1}\lambda^{n-1}+\cdots
+A_1\lambda+A_0$$ have negative real part. Recall that the
conditions among the coefficients that imply this property are given
by algebraic relations among them and can be obtained via the
Routh--Hurwitz criterion.

In \cite{CimGasMan2021} it is proved that $p_n\le1/2^n,$ so
$\lim_{n\to\infty}p_n=0.$ Furthermore, it is also shown that
$p_1=1/2,$ $p_2=1/4,$ $p_3=1/16,$ $p_4<1/32,$ and, by using Monte
Carlo simulation, that $p_4\simeq0.0092$ and $p_5\simeq0.0007.$
Hence the following questions arise:

\caixa{
\begin{problem} Let $p_n$ be the probability that the zero solution
of the $n$-th order linear random differential equation
\eqref{eqdiflineal} is globally asymptotically stable. Find the
asymptotic expansion of $p_n$ at $n=\infty.$ Is it  true that the
sequence $p_n$ is strictly decreasing?
\end{problem}}

Similar problems can be consider for the random difference equations
of order $n$ of type
\begin{equation}\label{eq:eed}
A_nx_{k+n}+A_{n-1}x_{k+n-1}+\cdots +A_1 x_{k+1}+A_0x_k=0,
\end{equation}
where all the coefficients are again i.i.d.\!  random variables with
$N(0,1)$ distribution. In this situation, the global asymptotic
stability  happens when all the zeros  of the associated random
characteristic polynomial $Q(\lambda)$ have modulus smaller than 1.
Recall that  this property is characterized by the so-called  Jury
criterion. In fact, it is possible to get Jury conditions from
Routh--Hurwitz conditions and viceversa by using the M\"{o}bius
transformation that sends the left hand part of the complex plane
into the complex ball of radius 1 and its inverse.

If we call  $q_n$ the probability of the zero solution of
\eqref{eq:eed} to be globally asymptotically  stable, for instance,
$q_1=1/2,$  $q_2=\arctan(\sqrt{2})/\pi\simeq0.304,$
$q_3\simeq0.172,$ $q_4\simeq0.103,$ and $q_5\simeq0.059,$ see again
\cite{CimGasMan2021}.

\section{Some geometrical problems}\label{se:gp}

In this section we present three problems with a geometric flavour.

\subsection{Triangular billiards}

Consider  a mathematical ideal convex billiard with a smooth
$\mathcal{C}^1$ boundary. A punctual ball  moves on it alternating
between
 free motion (following a straight line) and
specular reflections from its boundary. When the particle hits the
boundary it reflects from it without loss of speed with an elastic
collision. Then, it is know that  there are always periodic
trajectories (\cite{Kat1995}). The number of times that a periodic
trajectory touches the boundary before closing is called its period.

If we consider a convex billiard, but with a polygonal boundary it
is natural to wonder if periodic trajectories also always exist. For
this type of billiards if the punctual ball arrives to a corner the
trajectory stops and, of course, it is not periodic. In fact, even
for triangular billiards this question is an open problem.

\caixa{
\begin{problem} Do all triangular billiards have some periodic
trajectory?\end{problem}}

\begin{figure}[h]
    \centering\includegraphics[scale=0.45]{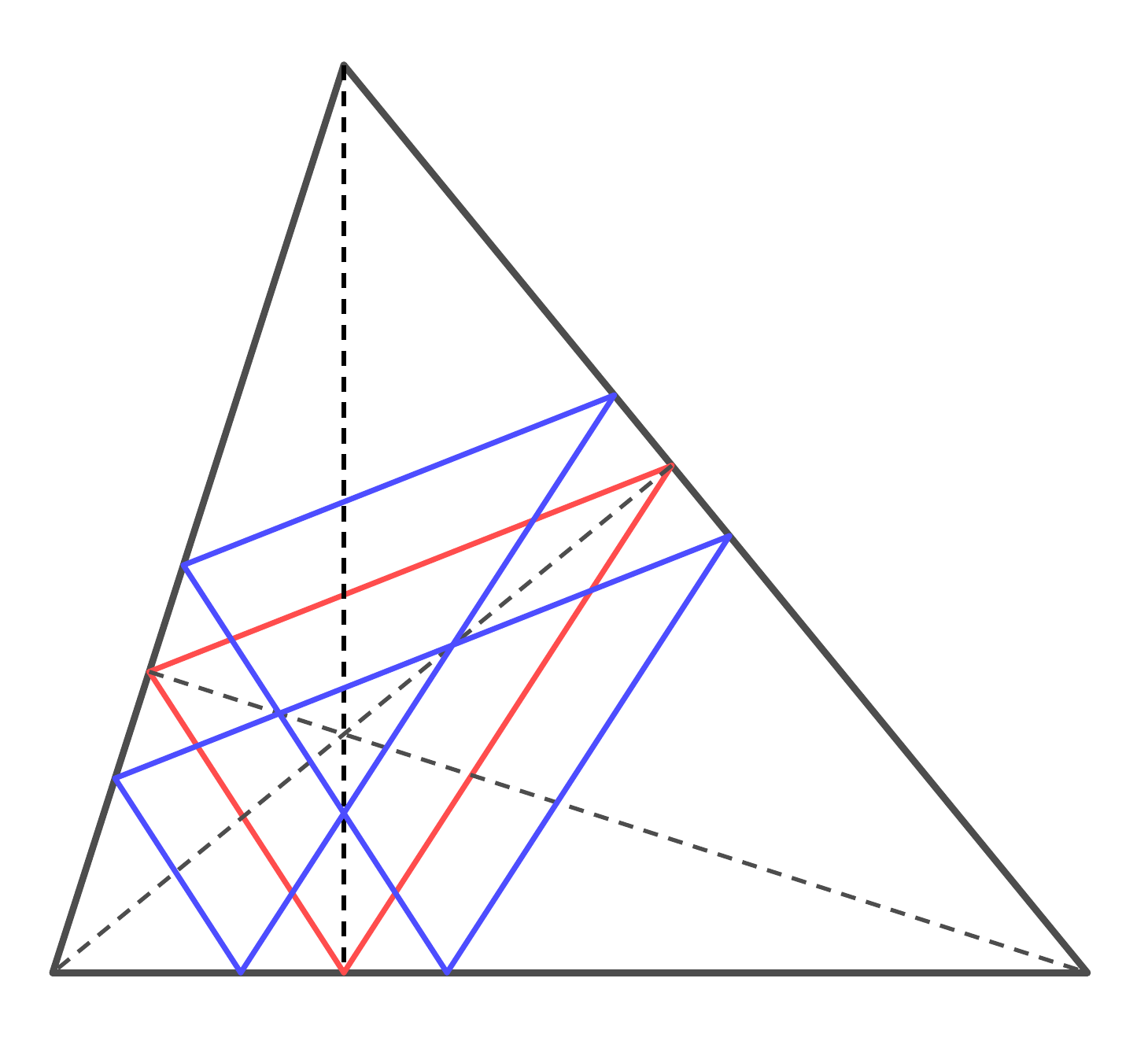}
\caption{Trajectories with periods 3 and 6 for an acute triangular
    billiard.}\label{fig:triangle}
\end{figure}

For many triangular billiards the answer is yes, see
\cite{Art2015,HalHun2000}. For instance, this is the case when the
boundary of the billiard is an acute triangle. In this case there is
always a periodic trajectory of period 3. It is formed by the
triangle that has as vertices the basis points at the boundaries of
the three heights, see Figure \ref{fig:triangle}. Sometimes this
trajectory is called Fagnano's trajectory, because he found it in
1775 for solving another problem: {\it find the inscribed triangle to
an acute triangle with smaller length}, see Figure~\ref{fig:triangle}. The answer is also
affirmative for rectangular triangles, isosceles triangles
(\cite{Cip1995}), for obtuse triangles with no angle larger that 100
degrees (\cite{Sch2009}), and also for rational triangles
(\cite{Bos1998}). Recall that a triangle is called rational if all
its angles are rational multiples of $\pi.$

\subsection{An extended Poncelet's problem.}

Given  two convex algebraic ovals, $\gamma$ and $\Gamma,$  as in
Figure \ref{fi:oval}, consider the Poncelet's map $P$  from the
exterior curve $\Gamma$ into itself, also introduced in this figure.
The iteration of this procedure is sometimes called Poncelet's
procedure.

\begin{figure}[h]
\centerline{\includegraphics[scale=0.8]{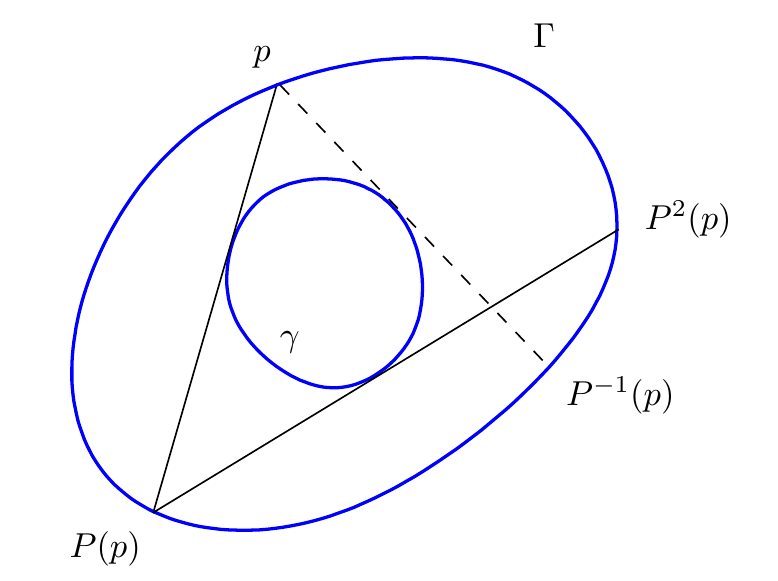}}
\caption{Poncelet's map.}\label{fi:oval}
\end{figure}

The name for this map is introduced in \cite{CimGas2010} because
Poncelet, a French engineer and a mathematician, considered it for
first time when both ovals are ellipses, while he was prisoner in
Saratov (Russia) during 1812-1814. He proved the following nice
theorem that is illustrated in Figure~\ref{fi:oval2} when $n=3.$

\begin{theo}[Poncelet's theorem] If given an initial point $p$ on the exterior ellipse $\Gamma$ the  Poncelet's
procedure  closes for first time after  $n$ steps, then the  same
happens for any other initial condition.
\end{theo}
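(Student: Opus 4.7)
The plan is to reinterpret the Poncelet procedure as the action of a translation on an elliptic curve. Translations on an elliptic curve have the crucial property that being periodic of period $n$ at a single point is equivalent to being periodic of period $n$ at every point, which is exactly the conclusion of the theorem.

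First, working projectively over $\C$, I would define the incidence curve
$$E = \{(p, \ell) : p \in \Gamma,\ \ell \text{ tangent to } \gamma,\ p \in \ell\} \subset \Gamma \times \check{\gamma},$$
where $\check{\gamma}$ denotes the dual conic of lines tangent to $\gamma$. The projection $\pi_1 : E \to \Gamma$ is generically $2$-to-$1$, since from a generic point of $\Gamma$ there are two tangents to $\gamma$; the branch locus is exactly $\Gamma \cap \gamma$, which has cardinality $4$ by Bezout. The Riemann--Hurwitz formula for the double cover $E \to \Gamma \cong \mathbb{P}^1$ then gives
$$2g(E) - 2 = 2\cdot(-2) + 4,$$
so $g(E) = 1$ and $E$ is a smooth elliptic curve.

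Next, I would introduce two natural involutions on $E$: the map $\sigma_1(p, \ell) = (p, \ell')$ that swaps the two tangents from $p$ to $\gamma$, and the map $\sigma_2(p, \ell) = (p', \ell)$ that swaps the two intersections of $\ell$ with $\Gamma$. Each has fixed points (the ramification points of the corresponding projection), and a single Poncelet step, lifted to $E$, is exactly the composition $T := \sigma_1 \circ \sigma_2$.

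The decisive step is to invoke the fact that, once an origin is chosen on $E$, any involution of the elliptic curve possessing a fixed point must act as $x \mapsto c - x$ in the group law; this follows from the classification of holomorphic self-maps of a complex torus $\C/\Lambda$ as affine maps, combined with the observation that the $+1$ linear part would give a fixed-point-free translation. Hence $\sigma_1(x) = a - x$ and $\sigma_2(x) = b - x$ for suitable $a, b \in E$, and therefore
$$T(x) = \sigma_1(\sigma_2(x)) = a - (b - x) = x + (a - b),$$
so the lifted Poncelet map is the translation $T_c$ by $c = a - b$. Its $n$-th iterate is translation by $nc$; if one point is fixed by $T_c^{\,n}$ then $nc = 0$ in the group, whence $T_c^{\,n}$ is the identity on all of $E$. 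Projecting back to $\Gamma$ gives closure of the Poncelet procedure after $n$ steps from every initial point.

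The main obstacle is the structural step: showing that an involution of an elliptic curve with a fixed point must be a reflection $x \mapsto c - x$. This requires genuine input from the theory of automorphisms of complex tori and is what forces the argument to leave elementary projective geometry and pass to the elliptic-curve setting. A secondary but more routine difficulty is handling degenerate configurations of $\Gamma$ and $\gamma$ (tangential intersections, multiple common tangents) where $E$ may fail to be smooth; these cases can be treated by a deformation or limiting argument from the generic situation.
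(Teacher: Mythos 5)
The paper does not actually prove this theorem: it is stated as a classical result and the reader is referred to the literature (Tabachnikov's survey) for proofs, so there is no ``paper proof'' to compare against line by line. Your argument is the standard Griffiths--Harris proof via the incidence correspondence, and its structure is sound: the genus computation by Riemann--Hurwitz for the double cover $E\to\Gamma\cong\mathbb{P}^1$ branched over the four points of $\Gamma\cap\gamma$ is correct, the identification of one Poncelet step with $\sigma_1\circ\sigma_2$ is correct, and the key algebraic fact --- that a nontrivial involution of an elliptic curve with a fixed point is affine with linear part $-1$, so a composition of two such involutions is a translation --- is correctly stated and correctly used. Note that the translation structure also gives you the ``for the first time after $n$ steps'' clause for free: if $T^k\neq\operatorname{id}$ for $k<n$ then $T^k$ is a nontrivial translation, hence fixed-point free, so no orbit can close earlier; it would be worth making that one line explicit. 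The two places where your write-up leans on unstated facts are (i) irreducibility and smoothness of $E$ for the particular pair of nested ellipses (you need the four complex intersection points to be distinct, which holds generically but requires the deformation argument you mention in the degenerate case), and (ii) the passage from the complex statement back to the real Poncelet dynamics; the latter is harmless in the direction you need, since a real $n$-periodic orbit gives a fixed point of $T^n$ on $E(\C)$, forcing $T^n=\operatorname{id}$ and hence closure of every real orbit. With those two points spelled out, this is a complete and correct proof, and arguably more informative than the bare statement in the paper since it also yields the rotation-number dichotomy mentioned in the theorem that follows it.
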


\begin{figure}[h]
\centerline{\includegraphics[scale=0.9]{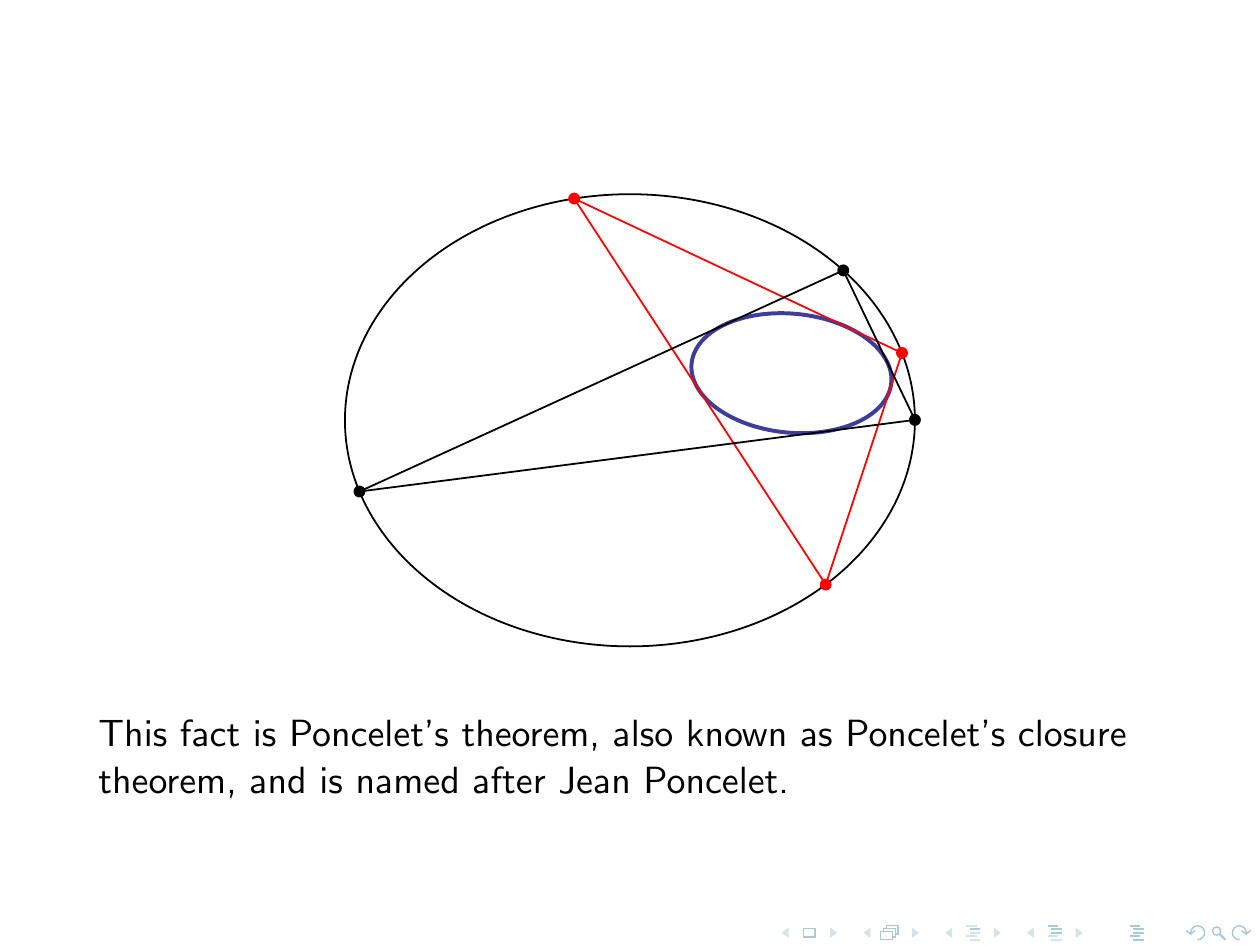}} \caption{Two 3
steps closed Poncelet's trajectories.}\label{fi:oval2}
\end{figure}

There are several  proofs of the above theorem, see \cite{Tab1995}.
In fact, in the language of dynamical systems the above result can
be extended giving the following theorem:

\begin{theo} Let $P$ be the Poncelet's map between two ellipses. Then $P$ is
 conjugated  with a  rotation $R$  of the
circle. In particular,
\begin{itemize}

\item[(i)] if the  rotation number of $R$ is rational  then
all points  are  periodic for $P$ and with the same period
(original Poncelet's theorem).

\item[(ii)] if the  rotation number of $R$ is irrational  then
all points of any orbit of $P$ are  dense  on the exterior ellipse.

\end{itemize}

\end{theo}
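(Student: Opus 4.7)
The plan is to show that the Poncelet map $P:\Gamma\to\Gamma$ lifts to a translation on an associated elliptic curve. Topologically $\Gamma\cong S^{1}$, and $P$ is an orientation-preserving $C^{\infty}$ diffeomorphism. Once a $P$-invariant Borel probability measure $\mu$ on $\Gamma$ with smooth positive density is produced, the function
\[
h(x)=\mu\big([x_{0},x]\big)\pmod 1
\]
defines a topological conjugacy between $P$ and the rigid rotation $R$ of $S^{1}$ by the rotation number $\rho=\rho(P)$. Statements (i) and (ii) are then immediate from the classical dichotomy for circle rotations: $\rho\in\Q$ forces every orbit to be periodic with the same period (the denominator of $\rho$), while $\rho\notin\Q$ forces every orbit to be equidistributed (Kronecker--Weyl), hence dense on $\Gamma$.

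The construction of $\mu$ exploits the algebraic structure of the configuration. Consider the incidence curve
\[
E=\big\{(p,q)\in\Gamma\times\gamma\,:\,\text{the line }pq\text{ is tangent to }\gamma\text{ at }q\big\}
\]
together with the forgetful projection $\pi(p,q)=p$. For $(\gamma,\Gamma)$ in general position, $\pi:E\to\Gamma$ is a degree-$2$ cover (there are two tangents from a generic $p\in\Gamma$ to $\gamma$), ramified over the four complex intersection points of $\Gamma\cap\gamma$; by Riemann--Hurwitz, $E$ is a smooth projective curve of genus one. Two tautological involutions act on $E$: $\iota_{1}(p,q)=(p,q')$ swaps the two tangent lines from $p$ to $\gamma$, while $\iota_{2}(p,q)=(p',q)$ swaps the two intersections of the line $pq$ with $\Gamma$. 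By construction $\pi\circ(\iota_{2}\circ\iota_{1})=P\circ\pi$. Realizing $E$ as a complex torus $\C/\Lambda$ and using that every holomorphic involution of $\C/\Lambda$ with a fixed point has the form $z\mapsto c-z$, the composition $\iota_{2}\circ\iota_{1}$ is a translation $z\mapsto z+\tau$ on $E$. Translations preserve Haar measure, so its push-forward by $\pi$ is the required smooth $P$-invariant probability measure with full support on $\Gamma$.

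The main obstacle is the identification of $\iota_{1},\iota_{2}$ with affine negations on the elliptic curve $E$: one must verify that $E$ is connected and smooth (which breaks down in degenerate positions of $\gamma,\Gamma$) and that each $\iota_{i}$ is holomorphic and has a fixed point, so that the ``affine involution'' lemma on complex tori applies. The real locus $E(\R)$ must also be checked to project onto all of $\Gamma$, so that the push-forward measure is genuinely supported there; when $\gamma$ lies strictly inside $\Gamma$ the four branch points of $\pi$ are complex conjugate pairs, so $\pi|_{E(\R)}$ is an unramified double cover of $\Gamma\cong S^{1}$, and this is a routine verification. A fallback route avoiding algebraic geometry would be to treat $P$ as a billiard-type map with caustic $\gamma$ and to write down an explicit invariant $1$-form on $\Gamma$ using the symplectic structure on the space of oriented lines in $\R^{2}$; integrating this $1$-form produces the invariant density directly and reaches the same conclusion.
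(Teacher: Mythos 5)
The paper does not actually prove this theorem: it is quoted as known background and the reader is referred to \cite{Tab1995} for proofs, so your argument can only be compared with the standard literature. What you propose is essentially the classical Griffiths--Harris/Tabachnikov proof, and the skeleton is sound: the incidence curve $E$ has genus one by Riemann--Hurwitz, the two tautological involutions are of the form $z\mapsto c_i-z$ because they are holomorphic and have fixed points, their composition is a translation, and pushing forward the invariant $1$-form gives a nonatomic, fully supported invariant measure on $\Gamma$, whence conjugacy to a rotation and the rational/irrational dichotomy by Kronecker--Weyl. Your ``fallback'' via the invariant measure coming from the symplectic structure on the space of oriented lines is exactly the proof presented in \cite{Tab1995}, and is arguably the cleaner route since it avoids all the algebro-geometric genericity checks.

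Two points need repair. First, the intertwining identity $\pi\circ(\iota_{2}\circ\iota_{1})=P\circ\pi$ cannot hold on all of $E$: the point $\pi\bigl(\iota_{2}(\iota_{1}(p,q))\bigr)$ depends on which of the two tangency points $q$ over $p$ one starts from, and the two choices yield $P(p)$ and $P^{-1}(p)$ respectively. When $\gamma$ lies strictly inside $\Gamma$ the real locus $E(\R)$ is a disjoint union of two circles, each mapped diffeomorphically onto $\Gamma$ by $\pi$ and each preserved by the translation; the relation with $P$ holds on one component and with $P^{-1}$ on the other, which is harmless since $P$ and $P^{-1}$ are simultaneously conjugate to (inverse) rotations, but the statement as written is false as an identity on $E$. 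Second, the map $h(x)=\mu([x_{0},x])$ conjugates $P$ to the rotation by the angle $\mu([x_{0},P(x_{0})])$; that this angle coincides with the rotation number $\rho(P)$ is then a consequence, not an input, so it is cleaner to invoke the general fact that an orientation-preserving circle diffeomorphism admitting a nonatomic invariant probability measure of full support is topologically conjugate to a rotation. With these adjustments the argument is complete.
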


In \cite{CimGas2010} it is  proved the following result, that shows
that Poncelet's result is not true for all ovals.

\begin{prop} Consider $\gamma=\{x^{2n}+y^{2n}=1\}$ and
$\Gamma=\{x^{2m}+y^{2m}=2\}$ with $n,m\in\N.$ Then their associated
Poncelet's map has rotation number $1/4$ and it is conjugated to a
rotation if and only if $n = m = 1.$
\end{prop}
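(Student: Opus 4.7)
\emph{Rotation number.} The plan is first to exhibit an explicit period-$4$ orbit of $P$. The four points $(\pm 1,\pm 1)$ belong to $\Gamma$ since $1^{2m}+1^{2m}=2$, and the four segments $y=\pm 1$, $x=\pm 1$ joining them are tangent to $\gamma$ at $(0,\pm 1)$ and $(\pm 1,0)$ respectively. Hence, with counterclockwise orientation, the Poncelet procedure generates the cycle
\[
(1,1)\to(-1,1)\to(-1,-1)\to(1,-1)\to(1,1),
\]
a genuine period-$4$ orbit. For an orientation-preserving circle homeomorphism, the existence of an orbit of minimal period $4$ forces the rotation number to be of the form $k/4$ with $\gcd(k,4)=1$; with the chosen orientation this gives $\rho(P)=1/4$.

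\emph{Reduction to $P=\sigma$.} The key observation for the second assertion is that the rotation $\sigma:(x,y)\mapsto(-y,x)$ by $\pi/2$ preserves both $\gamma$ and $\Gamma$ (both have $D_{4}$ symmetry) and, being an isometry of the plane that stabilises the pair, commutes with $P$: $P\circ\sigma=\sigma\circ P$. In the natural angular parametrisation of $\Gamma$, $\sigma$ is literally a rotation by $\pi/2$, so $\rho(\sigma)=1/4$. Put $Q:=\sigma^{-1}\circ P$. Since the rotation number is additive on commuting orientation-preserving circle homeomorphisms, $\rho(Q)=0$. Now if $P$ were conjugate to $R_{1/4}$ then $P^{4}=\operatorname{id}$, and together with $\sigma^{4}=\operatorname{id}$ and the commutation this gives $Q^{4}=\operatorname{id}$. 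A periodic orientation-preserving homeomorphism of the circle with rotation number $0$ must be the identity (on each arc between fixed points, $Q$ is monotone and periodic with fixed endpoints, hence the identity), so $P=\sigma$ on $\Gamma$.

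\emph{Algebraic translation.} It thus remains to decide when, for every $(x,y)\in\Gamma$, the chord from $(x,y)$ to $(-y,x)$ is tangent to $\gamma$ at some $(\alpha,\beta)$ with $\alpha^{2n}+\beta^{2n}=1$. Writing this tangent line as $\alpha^{2n-1}X+\beta^{2n-1}Y=1$ and imposing that it contains both points yields
\[
\begin{pmatrix} x & y\\ -y & x\end{pmatrix}\begin{pmatrix}\alpha^{2n-1}\\ \beta^{2n-1}\end{pmatrix}=\begin{pmatrix}1\\1\end{pmatrix},
\]
whose determinant $x^{2}+y^{2}$ is nonzero on $\Gamma$; hence $\alpha^{2n-1}=(x-y)/(x^{2}+y^{2})$ and $\beta^{2n-1}=(x+y)/(x^{2}+y^{2})$. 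Because $2n-1$ is odd one has $\alpha^{2n}=|\alpha^{2n-1}|^{p}$ (similarly for $\beta$) with $p:=2n/(2n-1)$, so the condition $(\alpha,\beta)\in\gamma$ becomes the functional identity
\[
|x-y|^{p}+|x+y|^{p}=(x^{2}+y^{2})^{p}\qquad\text{for every }(x,y)\in\Gamma.
\]

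\emph{Forcing $n=m=1$.} Evaluating at $(2^{1/(2m)},0)\in\Gamma$ reduces the identity to $2\cdot 2^{p/(2m)}=2^{p/m}$, i.e.\ $p=2m$; since $p=2n/(2n-1)\in(1,2]$ for every $n\in\N$, this already forces $m=1$ and $p=2$, whence $n=1$. Conversely, for $n=m=1$ the identity reads $2(x^{2}+y^{2})=(x^{2}+y^{2})^{2}$, which is precisely the equation of $\Gamma=\{x^{2}+y^{2}=2\}$, recovering Poncelet's classical closure theorem. The subtle point of the whole plan is the reduction ``$P$ conjugate to a rotation $\Longrightarrow P=\sigma$''; once the commutation $P\circ\sigma=\sigma\circ P$ is in place, the rest is a direct computation.
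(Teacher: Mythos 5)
The paper does not actually prove this proposition; it only quotes it from \cite{CimGas2010}, so there is no in-paper proof to compare with. Your argument is correct and self-contained, and its first half coincides with the standard one: the circumscribed square with vertices $(\pm1,\pm1)$ is a genuine period-$4$ orbit, which pins the rotation number at $1/4$. Where you take your own route is in the rigidity step. Instead of only using that conjugacy to a rotation of rotation number $1/4$ forces every orbit to close after four steps and then testing one well-chosen orbit, you exploit the order-four symmetry $\sigma(x,y)=(-y,x)$, which commutes with $P$, to upgrade closure of all orbits to the exact identity $P=\sigma$; you then translate $P=\sigma$ into the dual-curve identity $|x-y|^{p}+|x+y|^{p}=(x^{2}+y^{2})^{p}$ on $\Gamma$ with $p=2n/(2n-1)$, which a single evaluation at $(2^{1/(2m)},0)$ destroys unless $n=m=1$. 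This buys a clean ``only if'' direction and an ``if'' direction that comes for free, since for $n=m=1$ the identity reduces to the equation of $\Gamma$ and $P=\sigma$ is itself a rotation, so Poncelet's theorem need not be invoked. Two minor polish points: you do not need the additivity of rotation numbers for commuting homeomorphisms, because $Q=\sigma^{-1}\circ P$ visibly fixes $(1,1)$ (as $P(1,1)=(-1,1)=\sigma(1,1)$), which already yields $\rho(Q)=0$; and the well-definedness of $P$ (that $\gamma$ lies strictly inside $\Gamma$, an elementary check via $(u^{n}+v^{n})^{1/n}\ge\max(u,v)\ge((u^{m}+v^{m})/2)^{1/m}$ for $u+v=1$) is left implicit, though it is really part of the setup of the statement. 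I see no gaps.
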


A natural question is the following:

\caixa{
\begin{problem}
Are there two irreducible algebraic curves of degrees $n$ and $m,$
with  $n+m>4$, having each one of them an  oval, for which the
Poncelet's map $P$ is well defined and it  is conjugated  to a
rotation of the circle?
\end{problem}}

This problem is somehow reminiscent of the  classical Birkhoff's
conjecture. Recall that it claims that the boundary of a strictly
convex integrable billiard table is necessarily an ellipse (or a
circle as a special case). Recently, in \cite{KalSor2018} it is
proved a local version of this conjecture: a small integrable
perturbation of an ellipse must be an ellipse.

Inspired on the above point of view, we propose next local version
of the above question:

\caixa{
\begin{problem} Consider the ovals $\gamma=\{x^2+y^2-1=0\}$ and
$\Gamma_\varepsilon=\{p_2(x,y)+\varepsilon p_m(x,y)=0\}$ where
$\Gamma_0$ is an ellipse that surrounds $\gamma,$
$p_2(x,y)+\varepsilon p_m(x,y)=0$ is an irreducible curve, with
$p_m$ a polynomial of degree $m\ge3,$  and $\varepsilon$ is a small
parameter. Is it true that the Poncelet's map associated to both
ovals is conjugated to a rotation if and only if $\varepsilon=0$?
\end{problem}}

\subsection{Loewner's conjecture} By using the complex notation
introduced in Section \ref{se:equi} we  consider the Cauchy--Riemman
operator
\[
\frac{\partial }{\partial \bar z}=\frac12\left(\frac{\partial
}{\partial x}+{\rm i}\frac{\partial }{\partial y}\right).
\]
Then
\[
\frac{\partial^2 }{\partial \bar z^2}=\frac14\left(\frac{\partial^2
}{\partial x^2}-\frac{\partial^2 }{\partial x^2}+2{\rm
i}\frac{\partial^2 }{\partial x\partial y}\right).
\]
Similarly, given $1<n\in\N,$ we can define $\frac{\partial^n
}{\partial \bar z^n}.$ Given a neighbourhood of the origin
$\mathcal{U}\subset\R^2$ and a class $\mathcal{C}^{n+1}$ function
$f:\mathcal{U}\to\R$ such that $f(0,0)=0,$ we look at the planar
differential equation
\begin{align}\label{eq:loe}
\dot x= 2^n\operatorname{Re}\left( \frac{\partial^n }{\partial \bar
z^n} f(x,y)\right),\quad \dot y=2^n\operatorname{Im}\left(
\frac{\partial^n }{\partial \bar z^n}f(x,y) \right).
\end{align}
For instance for $n=1$ and $2$ we have
\begin{align*}
&(\dot x,\dot y)= (f_x(x,y),f_y(x,y)),\qquad n=1,\\ &(\dot x,\dot
y)= (f_{xx}(x,y)-f_{yy}(x,y),2f_{x,y}(x,y)),\qquad n=2.
\end{align*}

Recall that the index is an integer number associated to any
isolated equilibrium point of a planar differential equation that
measures the number of turns of its associated vector field near it,
see \cite{DumLliArt2006} for a precise definition. When this
isolated equilibrium point admits a finite sectorial decomposition
(this always happens for instance in the analytic case, for
non-monodromic singularities, see \cite{IlyYak2008}) and $e, h,$ and
$p$ denote its number of elliptic, hyperbolic, and parabolic
sectors, respectively, then  the index is $1+ \frac{e -h} 2,$ due to
Poincar\'{e}'s index formula.

According to \cite{Tit1973}, next conjecture was proposed by Loewner
around 1950, see also \cite{LliMar2012}.

\caixa{
\begin{problem}[Loewner's conjecture] Assume that the origin is an
isolated equilibrium point of the differential equation
\eqref{eq:loe} and that $f$ is analytic at this point. Then the
index of the associated vector field at the origin is is not greater
than  $n.$
\end{problem}}

Another related conjecture is Carath\'{e}odory's conjecture, that
asserts that every smooth convex embedding of a 2-sphere in $\R^3,$
i.e. an ovaloid, must have at least two umbilics. Recall that for
any surface in $\R^3,$ the eigenspaces of the second fundamental
form define two orthogonal line fields (principal directions) whose
singularities are exactly the so-called umbilics. It is known that
if Loewner's conjecture is true when $n=2$  the Carath\'{e}odory's
conjecture is also  true in the analytic case, see for instance
\cite{And2003,GutSot1998}.

Loewner's conjecture is true for $n=1$ and several authors have
proved it for $n=2,$ although there are several wrong proofs, see
some comments in \cite{GutSan1997,GutSot1998}. Anyway, it would be
very interesting to have simple proofs of Loewner's conjecture for
$n=2$ and to further investigate it, also for functions of class
$\mathcal{C}^{n+1}.$

\section{Problems involving polynomials}\label{se:pol}

\subsection{A moments problem.}

Arno van den Essen, the author of the interesting monograph
\cite{Ess2000} about the Jacobian conjecture, which recall that
asserts the bijectivity of the complex polynomial maps with constant
Jacobian, call our attention to the following question:

\caixa{
\begin{problem} Let $f(x_1,x_2,\ldots,x_n)$ be a polynomial in $\mathbb{C}[x_1,x_2,\cdots,x_n]$ such that the following
 moment conditions hold
\[
M_m:=\int _0^1\int _0^1 \cdots \int _0^1  f^m(x_1,x_2,\ldots,x_n)\,
{\rm d}x_1\,{\rm d}x_2\,\cdots {\rm d}x_n=0,\quad m\ge 1.
\]
Is it true that $f=0$?
\end{problem}}

This question and some extensions where some especial weights are
added to the above integrals, is very related with the Jacobian
conjecture, see \cite{DerEssWen2017, Fra2014}. Obviously it can also
be extended to wider classes of maps $f.$

In fact, the answer when $n=1$ is yes, see \cite{Pak2013}, but
anyway, in this case it would be nice to get a simple direct proof.
Notice  that when $n=1$ for $|t|$ small enough
\[
\frac1{1-tf(x)}=1+\sum_{m=1}^\infty (tf(x))^m,
\]
and since the convergence is uniform, we get that under the above
hypotheses,
\[
\int_0^1\frac1{1-tf(x)}\,{\rm d}x=1+\sum_{m=1}^\infty t^m\int_0^1
f^m(x)\,{\rm d}x= 1+\sum_{m=1}^\infty M_mt^m=1.
\]

Hence the answer of the above question when $n=1$ is equivalent to
prove that if $f(x)$ be a polynomial in $\mathbb{C}[x]$ such that
for all $|t|$ small enough:
\[
\int_0^1\frac1{1-tf(x)}\,{\rm d}x=1,
\]
then $f=0.$

Also an interesting related question is the following:

\caixa{
\begin{problem} (i) Let $f(x)$ be a polynomial in $\mathbb{C}[x]$ with  $k$ monomials. Is there a value $N(k)$ such that if the following
 finite set of moment conditions hold:
\[
M_n:=\int _0^1 f^n(x)\, {\rm d}x=0,\quad  1\le n\le N(k),
\]
then $f=0$?

(ii) If the answer is yes, find $N(k)$ or a good upper bound of this
number.
\end{problem}}

This type of questions  also appear in some classical problems for
Abel differential equations, where the vanishing of certain moments
imply the solution of the center problem, see for instance
\cite{CimGasMan2013}.

\subsection{Around Kouchnirenko's conjecture.}\label{se:few}

Descartes' rule implies that a 1-variable real polynomial with $m$
monomials has at most $m-1$ simple  positive real roots.

The  Kouchnirenko's conjecture   was posed as an
attempt to  extend  this rule to the
several variables  context. In the 2-variables case this conjecture
said that:

 \begin{quote}A real polynomial system
$f_1(x,y)=f_2(x,y)=0$ would have at most $(m_1-1)(m_2-1)$ simple
solutions with positive coordinates, where $m_i$ is the number of
monomials of each polynomial $f_i,$ $i=1,2.$
\end{quote}

 This conjecture was stated by
A.~Kouchnirenko in the late 70's, and published in  \cite{Kho1980}
in 1980. In 2002, in \cite{Haa2002}  a family of counterexamples
given by two trimonomials, being their minimal degree  106, was
constructed. In 2003 a much simpler family of counterexamples was
presented in \cite{LiRojWan2003} again formed by two trimonomials,
but of degree $6$. Both have exactly  $5$ simple solutions with
positive coordinates instead of the  $4$ predicted by the
conjecture. A similar counterexample  is:

\begin{prop}[\cite{Gas2020}]\label{pro:kou}  The bivariate trinomial system
\begin{equation}\label{eq:kou}
\left\{\begin{array}{l}
P(x,y):=x^{6}+\dfrac{61}{43} y^3-y=0,\\[12pt]
Q(x,y):=y^{6}+\dfrac{61}{43} x^3-x=0,
\end{array}\right.
\end{equation}
has 5 real simple solutions with positive entries.
\end{prop}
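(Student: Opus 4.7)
The plan is to exploit the involution $\sigma\colon(x,y)\mapsto(y,x)$, under which the system is invariant; the set of positive solutions therefore splits into the diagonal fixed locus $\{x=y\}$ and $\sigma$-orbits of size two. Since the target count factors as $5=1+2\cdot 2$, it suffices to produce exactly one positive diagonal solution and exactly two positive off-diagonal $\sigma$-orbits, and to verify simplicity throughout.

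For the diagonal piece, setting $y=x$ in either equation gives $x\bigl(x^5+\tfrac{61}{43}x^2-1\bigr)=0$. The auxiliary polynomial $g(x)=x^5+\tfrac{61}{43}x^2-1$ is strictly increasing on $(0,\infty)$, since $g'(x)=5x^4+\tfrac{122}{43}x>0$, with $g(0)=-1<0<\tfrac{61}{43}=g(1)$, so there is a unique positive diagonal solution $x^\ast\in(0,1)$. I would then confirm its simplicity by directly evaluating $\det\bigl(\partial(P,Q)/\partial(x,y)\bigr)$ at $(x^\ast,x^\ast)$ and using $g(x^\ast)=0$ to simplify the resulting expression to a manifestly nonzero rational function of $x^\ast$.

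For the off-diagonal piece I would use the $\sigma$-antisymmetric factorization
\[
P-Q=(x-y)\,h(x,y),\qquad h(x,y)=(x+y)(x^4+x^2y^2+y^4)-\tfrac{61}{43}(x^2+xy+y^2)+1,
\]
obtained from the identity $(x^2+xy+y^2)(x^3+y^3)=(x+y)(x^4+x^2y^2+y^4)$. Off-diagonal positive solutions are then exactly the positive zeros of $\{P=h=0\}$, equivalently of $\{P+Q=h=0\}$ with $x\neq y$. Passing to symmetric coordinates $s=x+y$, $p=xy$, the system becomes
\begin{align*}
H_1(s,p)&=s(s^4-4s^2p+3p^2)-\tfrac{61}{43}(s^2-p)+1,\\
H_2(s,p)&=(s^3-3sp)^2-2p^3+\tfrac{61}{43}(s^3-3sp)-s,
\end{align*}
and off-diagonal positive $\sigma$-orbits are in bijection with the solutions of $H_1=H_2=0$ inside the Vieta cone $\Delta=\{s>0,\,p>0,\,s^2-4p>0\}$.

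The final step is a univariate reduction: eliminate $p$ by forming $R(s)=\operatorname{Res}_p(H_1,H_2)\in\Q[s]$, a polynomial of $s$-degree at most $27$ by the Bezout bound on the bidegrees of $H_1,H_2$ in $p$. A certified real-root isolation of $R$ over $\Q$ (Sturm sequences, or Descartes--Bernstein subdivision in exact rational arithmetic) should identify exactly two positive roots $s_1,s_2$ whose back-substitution into the \emph{quadratic} $H_1(s_i,p)=0$ yields a unique $p_i\in(0,s_i^2/4)$ also satisfying $H_2(s_i,p_i)=0$; nonvanishing of $\det\bigl(\partial(H_1,H_2)/\partial(s,p)\bigr)$ at each $(s_i,p_i)$ then gives simplicity of the corresponding $\sigma$-orbit. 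The main obstacle is precisely this certified univariate count: $R(s)$ has unwieldy integer coefficients, and because $\tfrac{61}{43}$ presumably sits close to a bifurcation value where extra real roots would collide and disappear, the isolation must be performed in exact rational arithmetic rather than floating point, so as to rule out spurious or missed solutions. Once that algebraic step is pinned down, the $\sigma$-symmetry and the elementary diagonal analysis automatically deliver the claimed five positive simple solutions.
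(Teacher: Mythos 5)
Your reduction is sound as far as it goes: the symmetry splitting $5=1+2\cdot2$, the factorization $P-Q=(x-y)h$, the passage to $(s,p)=(x+y,xy)$, and the identification of off-diagonal positive $\sigma$-orbits with zeros of $(H_1,H_2)$ in the Vieta cone are all correct, and your diagonal count via monotonicity of $x^5+\tfrac{61}{43}x^2-1$ is essentially what the paper does (it invokes Descartes' rule for the same conclusion). The genuine gap is that the argument stops exactly where it would become a proof. The whole content of the proposition is the existence of the two off-diagonal orbits, and for that you offer only the assertion that a certified isolation of the degree-$27$ resultant $R(s)$ ``should identify exactly two positive roots'' with admissible back-substitution; you yourself flag this as the main obstacle. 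Until that computation is actually exhibited (the Sturm data or exact sign evaluations, plus the check that the common root $p_i$ of the quadratic $H_1(s_i,\cdot)$ lies in $\bigl(0,s_i^2/4\bigr)$ and that the Jacobians do not vanish), nothing is established. Note also that you are demanding more than is needed: the upper bound of $5$ is the general theorem for bivariate trinomial systems quoted immediately after the proposition, so only a lower bound is required, and two certified sign changes of $R$ at rational points of $(0,\infty)$, with the back-substitution check, would already suffice --- still a computation you have not done.

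The paper's proof avoids elimination altogether. It fixes five explicit rational intervals $I_1,\dots,I_5$ around the numerically found coordinates $\widetilde x_1<\dots<\widetilde x_5$, obtains the diagonal solution in $I_3\times I_3$ from Descartes' rule applied to $P(x,x)=x\bigl(x^5+\tfrac{61}{43}x^2-1\bigr)$, and certifies the two off-diagonal orbits by the Poincar\'e--Miranda theorem applied in the boxes $I_1\times I_5$ and $I_2\times I_4$; the remaining two solutions come from the $(x,y)\mapsto(y,x)$ symmetry, and distinctness is automatic because the boxes are disjoint. The verification then reduces to sign conditions for $P$ and $Q$ on the edges of two small rational boxes --- univariate polynomial inequalities on short intervals --- which is a far lighter certificate than controlling a degree-$27$ resultant with large coefficients for a parameter value $\tfrac{61}{43}$ sitting close to the bifurcation where the extra roots disappear. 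To complete your route you must either carry out the exact-arithmetic root isolation of $R$ or, more economically, replace that step by a Poincar\'e--Miranda or interval argument of this kind.
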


It is not difficult to find numerically  5 approximated  solutions
of the system. They are $(\widetilde x_1,\widetilde x_5)$,
$(\widetilde x_2,\widetilde x_4)$, $(\widetilde x_3,\widetilde
x_3)$, $(\widetilde x_4,\widetilde x_2)$, $(\widetilde
x_5,\widetilde x_1)$, where $\widetilde x_1=0.59679166,$ $\widetilde
x_2= 0.68913517,$  $\widetilde x_3= 0.74035310,$ $\widetilde x_4=
0.77980435$ and  $\widetilde x_5= 0.81602099.$ A proof that these
solutions actually exist follows by using Poincar\'{e}-Miranda theorem,
see \cite{Gas2020}. Recall that this theorem is an  extension  of
the classical Intermediate Value theorem (or Bolzano's theorem) to
higher dimensions. It was stated by H.~Poincar\'e in 1883 and 1884,
and proved by himself in 1886. In 1940, C.~Miranda (\cite{Mir1940})
re-obtained the result as an equivalent formulation of Brouwer fixed
point theorem:

\begin{theo}[Poincar\'{e}-Miranda (\cite{Maw2019})]
       Set $\mathcal{B}=\{\mathbf{x}=(x_1,\ldots,x_n)\in\R^n\,:\,L_i<x_i<U_i, 1\leq i\leq n\}$. Suppose that
         $f=(f_1,f_2,\ldots,f_n):\overline{\mathcal{B}}\rightarrow \R^n$ is
         continuous,  $f(\mathbf{x})\neq\mathbf{0}$
           for all $\mathbf{x}\in\partial \mathcal{B}$, and for  $1\leq i\leq
       n,$
       \begin{align*}
       &f_i(x_1,\ldots,x_{i-1},L_i,x_{i+1},\ldots,x_n)\leq 0\, \mbox{ and }\\
        &f_i(x_1,\ldots,x_{i-1},U_i,x_{i+1},\ldots,x_n)\geq 0.
        \end{align*}
       Then, there exists $\mathbf{s}\in\mathcal{B}$ such that  $f(\mathbf{s})= \mathbf{0}$.
       \end{theo}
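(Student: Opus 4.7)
The plan is to apply the Poincaré–Miranda theorem quoted immediately above to five disjoint rational rectangles, one around each of the approximate solutions listed in the text, and to certify simplicity by controlling the Jacobian on each rectangle. First I would exploit the $(x,y)\leftrightarrow (y,x)$ symmetry, which swaps $P$ and $Q$ and hence sends solutions to solutions: off-diagonal solutions come in symmetric pairs, so it is enough to locate the diagonal fixed point and the two pairs that mirror $(\widetilde x_{1},\widetilde x_{5})$ and $(\widetilde x_{2},\widetilde x_{4})$.

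On the diagonal $y=x$ the system reduces to $x\,g(x)=0$ with $g(x)=x^{5}+\tfrac{61}{43}x^{2}-1$. Descartes' rule of signs gives at most one positive root, while $g(0)<0<g(1)$ forces its existence, and $g'(x)=5x^{4}+\tfrac{122}{43}x>0$ on $(0,\infty)$ makes it simple. Simplicity as a zero of $(P,Q)$ follows from checking that the Jacobian
\[
J(x,y)=36\,x^{5}y^{5}-\Bigl(\tfrac{183}{43}y^{2}-1\Bigr)\Bigl(\tfrac{183}{43}x^{2}-1\Bigr)
\]
does not vanish at $(\widetilde x_{3},\widetilde x_{3})$, which I would establish by enclosing $\widetilde x_{3}$ in a small rational interval and bounding each factor.

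For the two off-diagonal representatives I would center rational rectangles $\mathcal{B}_{i}=[L_{i},U_{i}]\times[L'_{i},U'_{i}]$ around $(\widetilde x_{1},\widetilde x_{5})$ and $(\widetilde x_{2},\widetilde x_{4})$, chosen narrow enough to be mutually disjoint and disjoint from their mirror images under $(x,y)\leftrightarrow (y,x)$. Since $\partial_{x}P=6x^{5}>0$ and $\partial_{y}P=\tfrac{183}{43}y^{2}-1>0$ whenever $y>\sqrt{43/183}\simeq 0.485$, and analogously for $Q$, and since all five approximate coordinates exceed this threshold, on each such $\mathcal{B}_{i}$ both $P$ and $Q$ are strictly increasing in each variable. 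The Poincaré–Miranda sign tests then collapse to the four corner inequalities
\[
P(L_{i},U'_{i})<0<P(U_{i},L'_{i}),\qquad Q(L_{i},U'_{i})>0>Q(U_{i},L'_{i}),
\]
which are rational inequalities to be verified by direct evaluation. This yields a zero in each rectangle, and by symmetry a zero in each of the two mirror rectangles, for a total of four off-diagonal solutions; together with the diagonal one, five positive solutions are produced.

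Simplicity of these four solutions follows from keeping $J$ of constant sign on each $\mathcal{B}_{i}$, which again reduces to bounding $36x^{5}y^{5}$ from below and the second product from above via the monotonicity of each factor on the rectangle. The principal obstacle is the balancing act in the construction of the $\mathcal{B}_{i}$: they must be wide enough that the sign patterns of $P$, $Q$, and $J$ are unambiguous under rational arithmetic, yet narrow enough to remain pairwise disjoint and off the diagonal. Since the five approximate solutions cluster along the arc $\{x^{6}+y^{6}\approx 1\}$ inside the unit square, a clean set of certified inequalities calls for tight rational enclosures of the $\widetilde x_{j}$, but each of the remaining steps is a mechanical polynomial computation.
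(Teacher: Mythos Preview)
Your proposal is not a proof of the stated theorem at all. The statement you were asked to prove is the Poincar\'e--Miranda theorem itself, and the paper does not prove it either: it is quoted as a known result with a citation to \cite{Maw2019} and then \emph{applied} to establish Proposition~\ref{pro:kou}. What you have written is an outline of an application of the theorem to the trinomial system~\eqref{eq:kou}, which is exactly the content of that proposition, not of the theorem.

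If your intent was really to prove Proposition~\ref{pro:kou}, then your approach matches the paper's closely: both use the $(x,y)\leftrightarrow(y,x)$ symmetry, handle the diagonal solution via Descartes' rule on $P(x,x)$, and invoke Poincar\'e--Miranda on two rational boxes to certify the remaining pair of solutions (the paper gives explicit intervals $I_1,\ldots,I_5$). Your additional discussion of monotonicity of $P$ and $Q$ in each variable and of the Jacobian sign is a reasonable elaboration beyond what the paper sketches. But as a proof of the Poincar\'e--Miranda theorem, your text contains nothing: you take the theorem as given in your first sentence.
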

       In Figure \ref{fig:pm} we illustrate the hypotheses of the
       theorem for $n=2.$

\begin{figure}[h]
\centering\includegraphics{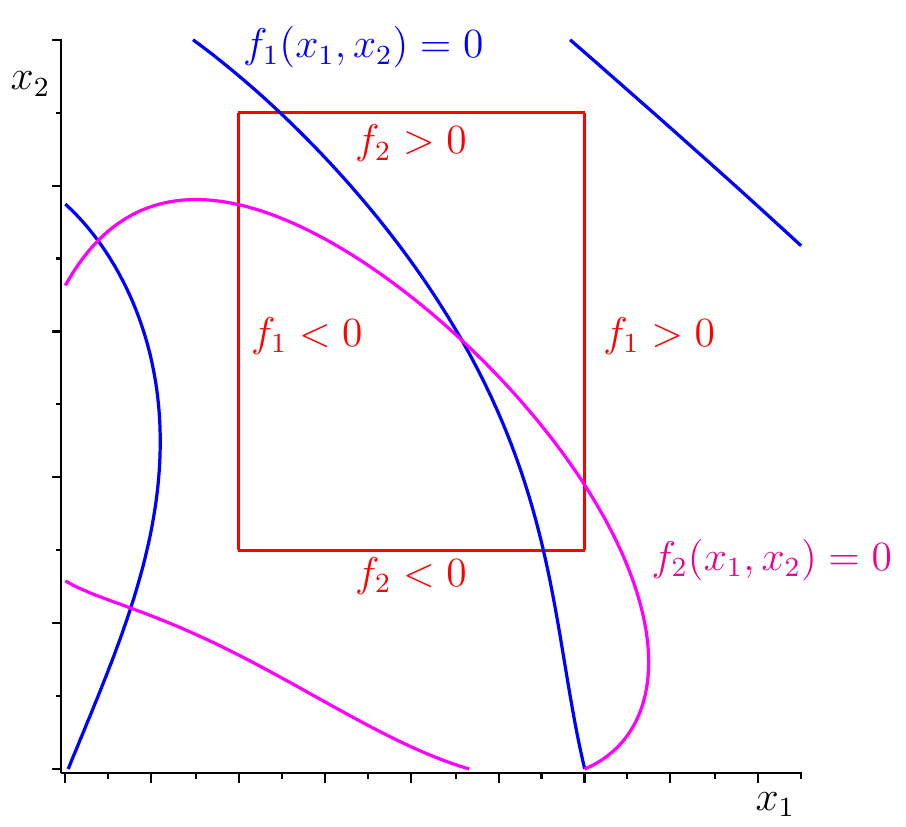}\caption{A Poincar\'{e}-Miranda
box.}\label{fig:pm}
\end{figure}

To prove Proposition \ref{pro:kou} we consider the following 5
intervals, with $\widetilde x_i\in I_i,$
\begin{align*}
I_1&=\left[{\frac{1}{2}},{\frac{1619}{2500}}\right],\,
I_2=\left[{\frac{1619}{2500}},{\frac{18}{25}}\right],\,
I_3=\left[{\frac{18}{25}},{\frac{75857}{100000}}\right],\\
I_4&=\left[{\frac{75857}{100000}},{\frac{4}{5}}\right],\,
I_5=\left[{\frac{4}{5}},{\frac{83}{100}}\right].
\end{align*}
and prove that our system  has  5 actual  solutions $( x_1, x_5)$,
$( x_2, x_4)$, $( x_3, x_3)$, $( x_4, x_2)$, $( x_5, x_1)$, with
$x_i\in I_i.$ By Descartes' rule we know that there is exactly  one
simple positive real root of $P(x,x).$ The corresponding $(x_3,x_3)$
is in in $I_3\times I_3$. By the symmetry of the system, if
$(x^*,y^*)$ is one of its solutions then  $(y^*,x^*)$  also is.
Finally, we can prove the existence of  two  more solutions (and so,
their symmetric ones) by using the Poincar\'{e}-Miranda theorem in the
boxes $I_1\times I_5$ and $I_2\times I_4$, see again \cite{Gas2020}.

In  \cite{Dic2007,LiRojWan2003}   the authors prove that any
bivariate trinomial system $ m_1=m_2=3$  has  at most 5  real simple
solutions with positive entries and henceforth this bound is sharp.
 A very interesting problem is:

\caixa{
\begin{problem}
Find a  reasonable (or sharp) upper bound  in terms of $m_i$ for the
maximum number of simple solutions  with positive coordinates, for a
real polynomial system $f_1(x,y)=f_2(x,y)=0,$ where  $m_i, i=1,2,$
is the number of monomials of each  $f_i$.
\end{problem}}

Not sharp upper bounds are known from the nice approach of
Khovanski\u{\i} who was a pioneer in 1980 in the study of the so
called fewnomials (\cite{Kho1980}). His general upper bound is as
follows: given a system of $n$ real polynomial equations in $n$
variables with a total of $n+k+1$ distinct monomials possesses at
most $2^{n+k\choose2}(n+1)^{n+k}$ nondegenerate solutions with
positive entries. This upper bound has been improved in
\cite{BihRojSot2008} decreasing it until $\frac{{\rm e}^2+3}4
2^{k\choose 2}n^k.$ In fact, in \cite{BihRojSot2008} when $n>k,$ an
example with $[\frac{n+k}k]^k$ nondegenerate solutions with positive
entries is given, showing that for $k$ fixed and $n$ big enough this
last upper bound is almost asymptotically sharp.

To illustrate that the above results are not sharp enough, let us
apply them to the planar trinomial situation ($n=2$) where  the
sharp upper bound is 5. In principle, it seems that $k=3$, because
there are 6 involved monomials, but notice that the number of
solutions in the first quadrant remains unchanged when we multiply
any of the equations by any monomial $x^iy^j.$ Hence, before
applying the given bounds, we can do this modification in a
convenient way to force to coincide two of them. In this way the
value $k$ can be assumed to be $k=2.$ Hence, Khovanski\u{\i}'s upper
bound gives $2^63^4=5184$ and its improvement gives $20.$

Recall again that by using  Descartes' rule it is easy to answer
this last problem  in one variable. Moreover, the $m-1$
corresponding to  positive  solutions implies a global upper bound
of $2m-1$ solutions: $m-1$ positive roots, $m-1$ negative ones and,
eventually, the root $0,$ that can be  multiple, with any
multiplicity.

It is natural to wonder if the following  modified
Kouchnirenko's bound works.

\caixa{
\begin{problem}
Is  $(2m_1-1)(2m_2-1)$  the maximum number of simple
solutions of a real polynomial system $f_1(x,y)=f_2(x,y)=0,$  where
$m_i$ is the number of monomials of each $f_i$?
\end{problem}
}

It is very easy to find examples of uncoupled systems having
 $(2m_1-1)(2m_2-1)$ simple solutions. For instance, for $m_1=m_2=3,$ then $ (2m_1-1)(2m_2-1)=25$. Consider
$(x^2-1)(x^2-4)x=x^5-5x^3-x.$ Then the system
\begin{equation*}
\left\{\begin{array}{l}
x^5-5x^3-x=0,\\
y^5-5y^3-y=0,
\end{array}\right.
\end{equation*}
has the  25  simple solutions $(x_i,x_j)$ with $x_i,
x_j\in\{-2,-1,0,1,2\}.$ Similarly, the system
\begin{equation*}
\left\{\begin{array}{l}
x^{5+r}-5x^{3+r}-x^{1+r}=0,\\
y^{5+s}-5y^{3+s}-y^{1+s}=0,\quad s>0,r>0,
\end{array}\right.
\end{equation*}
has  16  simple solutions and  9  multiple
ones.

Another example with 25  solutions can be constructed from our
counterexample \eqref{eq:kou}. It is obtained by taking the
equations $yP(x^2,y^2)=0 $ and $xQ(x^2,y^2)=0,$ giving
\begin{equation*}
\left\{\begin{array}{l} \left(x^{12}+\dfrac{61}{43}
y^6-y^2\right)y=x^{12}y+\dfrac{61}{43}
y^7-y^3=0,\\[12 pt]
\left(y^{12}+\dfrac{61}{43} x^6-x^2\right)x=y^{12}x+\dfrac{61}{43}
x^7-x^3=0,
\end{array}\right.
\end{equation*}
which has  $4\times 5=20$ solutions, 5 in each quadrant, plus 5 more
on the axes: $(0,0),$ $(\pm x^*,0)$ and $(0,\pm y^*),$ for some
$x^*$ and $y^*.$ Again  $25$ solutions and here $(0,0)$  is not  a
simple solution.

Of course there are also natural extensions to $n$ equations and $n$
variables of the above problems.

\subsection{Casas-Alvero's conjecture.}

Casas-Alvero arrived to the next conjecture at the turn of this
century, when he was working  trying to obtain an irreducibility
criterion for two variable power series with complex coefficients
(\cite{Cas2001}).

\caixa{
\begin{problem} [Casas-Alvero's conjecture]
If a complex  polynomial
$P$ of degree $n>1$ shares roots with all its derivatives, $P^{(k)},
k=1,2\ldots, n-1,$ then there exist two complex numbers, $a$ and
$b\ne0,$  such that  $P(z)=b(z-a)^n.$
\end{problem}}

 Notice that, in principle, the
common root between $P$ and each $P^{(k)}$ might depend on~$k.$ Several
authors have got partial answers, but as far as we know,  the conjecture remains open.
 For $n\le4$ the
conjecture is a simple consequence of the wonderful Gauss--Lucas
theorem, that asserts that the complex roots of $P'(z)$ are in the
convex hull of the roots of $P(z).$ It is also known that the
conjecture is true  for low degrees and also when~$n$ is
$p^m,2p^m,3p^m,$ or $4p^m,$ for some prime number $p$ and $m\in\N.$
 Nowadays, the first cases left open are $n=24,28,$ or $30.$ See the nice survey \cite{DraJon2011} and its references.

It is also known that if the conjecture holds in $\C,$ then it is
true over all fields of characteristic~$0.$ On the other hand, it is
not true over all fields of characteristic $p,$ see~\cite{BLSW2007}.
For instance, consider $P(x)=x^2(x^2+1)$ in characteristic $5$ with
roots $0,0,2$ and $3.$  Then $P'(x)=2x(2x^2+1),$ $P''(x)=12x^2+2=
2(x^2+1),$ and  $P'''(x)=4x$ and all them share roots with $P.$

Adding the hypotheses that $P$ is a real polynomial and all its
roots are real, the conjecture has a real counterpart, that also
remains open. It says that $P(x)=b(x-a)^n$ for some real numbers $a$
and $b\ne0.$ For this case,  from Rolle's theorem it
 follows easily  for $n\le4.$ However, it is not difficult to see that this tool does not suffice to prove it for bigger $n.$

Remarkably,  in the real case it is proved in \cite{DraJon2011} that
if  the condition for one of the derivatives of $P$  is removed,
then there exist polynomials, different from $b(x-a)^n,$ satisfying
the remaining $n-2$ conditions. The construction presented in that
paper of some of these polynomials  is a nice consequence of the
Brouwer's fixed point theorem in a suitable context.

Recently in \cite{CimGasMan2020} it is shown that the natural
extension of this real conjecture to the smooth world is not true.
There it is considered the following problem:  Fix $1<n\in\N$ and
let $F$ be a class $\mathcal{C}^n$  real function such that
$F^{(n)}(x)\ne0$ for all $x\in\R,$ having $n$ real zeroes, taking
into account their multiplicities. Assume that $F$ shares zeroes
with all its derivatives, $F^{(k)}, k=1,2\ldots, n-1.$ Is it true
that $F(x)=b (f(x))^n$ for some $0\ne b\in\R$ and some $f,$ a class
$\mathcal{C}^n$ real function, that has exactly one simple zero?

The answer for the above problem is ``yes'' for $n\le 4$ and ``no''
for $n=5.$  More concretely, it is proved that there exists $r>1$
such that if we consider
\[
F(x)=\int_{0}^x \int_0^u\int_1^w\int_c^z\int_1^y \big(r-\sin(t)\big)
\,{\rm d} t \,{\rm d} y \,{\rm d} z \,{\rm d} w \,{\rm d} u
\]
it holds that $F$ has the five zeroes $0,0,1,c,d$ satisfying $0<1<c<d,$
\begin{equation*}\label{cond}
F'(0)=0,\, F''(1)=0,\, F'''(c)=0,\, F^{(4)}(1)=0,\,\mbox{ and }\,
F^{(5)}(x)=r-\sin(x)>0,
\end{equation*}
and  $F$ is not of the proposed form.

\section{Some conjectures with a dynamical flavour}\label{se:con}

One of the most famous open problems is the  so-called $3x+1$
conjecture or Collatz problem (\cite{Lag2010}). Recall that it
assures that for any
  $x_0\in\N,$ the sequence defined by
\begin{equation*}
x_{n+1}=g(x_n)=\begin{cases} 3x_n+1,\quad &\mbox{when}\quad x_n
\quad \mbox{is odd},\\[0.1cm] {x_n}/{2},\quad &\mbox{when}\quad x_n \quad
\mbox{is even},
\end{cases}
\end{equation*}
arrives after finitely many steps to the 3-periodic behaviour
$4,\,2,\,1,\,4,\,2,\,1,\ldots$

We end this paper with three similar but less known conjectures. The
first one was proposed by N. Sloane (\cite{Slo1973}) in a journal of
recreational mathematics.

\caixa{
\begin{problem}[Conjecture of multiplicative persistence] Given $n\in\N,$ let $\Pi(n)\in\N$ be the  product of
all its digits. Set $\operatorname{Pm}(n)\in\N$ for the first
positive integer such that $\Pi^{\operatorname{Pm}(n)}(n)=
\Pi^{\operatorname{Pm}(n)+1}(n),$ where $\Pi^0=\operatorname{Id}$ i
$\Pi^k(n)=\Pi(\Pi^{k-1}(n)).$ Is it true that for all $n\in\N,$
$\operatorname{Pm}(n)\le 11$?
\end{problem}}

For instance, for  $n=68889,$
$\Pi(68889)=6\times8\times8\times8\times9=27648$ and
\[
68889\to 27648 \rightarrow 2688 \rightarrow768
\rightarrow336\rightarrow54\rightarrow20\rightarrow0\rightarrow0\rightarrow0\rightarrow\cdots.
\]
Then  $\operatorname{Pm}(6889)=7$, because
$\Pi^7(6889)=\Pi^8(6889)=0$  is the first coincidence. The smallest
numbers with respective multiplicative  persistence $1, 2,\ldots,
11$ are
\begin{align*}
10, 25, 39, 77, 679, 6788, 68889,
2677889, 26\,888\,999, 3\,778\,888\,999, 277\,777\,788\,888\,899.
\end{align*}
There are not examples with higher multiplicative  persistence for
$n<10^{233}.$

Notice that for instance $\Pi(M) = 2^{19}3^47^6.$ In general, a
simple first observation already pointed out in \cite{Slo1973} is
that the prime factors of any $\Pi(n)$ with persistence bigger
than~$3$ must be either $2^i3^j7^k$ or  $3^i5^j7^k.$ Therefore, it
suffices to study the  persistence of these numbers. This is so,
because $\Pi(n)=2^i3^j5^k7^m,$ is a product of one digit prime
numbers,
 with all the exponents greater or equal than zero, and moreover if  $2$ and $5$ appear
  together  $2^i3^j5^k7^m$  ends with zero and then $\Pi^2(n)=\Pi^3(n)=0.$ This problem has been recently
   extended to other basis and studied from a dynamical systems point of view in \cite{FarTre2014}.

\caixa{
\begin{problem}[196 conjecture]  Let $f:\N\rightarrow\N$ be
defined as $f(n)=n+\operatorname{rev}(n),$ where
$\operatorname{rev}$ is the map that reverses the order of the
digits of $n.$ Then, there are infinitely many natural numbers $n$
such that  $f^k(n),$ for $0<k\in\N,$ where $f^0=\operatorname{Id}$
and $f^k(n)=f(f^{k-1}(n)),$ is never  a palindromic number.
Moreover, the smallest of these numbers is $196.$
\end{problem}}

For instance, if $n=183,$ $f(183)=183+381=564,$ and
\[
183\rightarrow564\rightarrow 564 + 465 = 1029\rightarrow 1029 + 9201
= 10230\rightarrow10230 + 3201 = 13431,
\]
that is a palindromic number. Starting with $n=89,$ we need 24
iterations to arrive to a palindromic number, that is
$88132\,000\,23188.$ Until today, starting with $n=196$  no
palindromic numbers have been found yet, see \cite{Nis2012}. It is
not clear the origin of this problem. The first reference goes back
to Lehmer in 1938 (\cite{Leh1938}). The question recovered some
interest after the paper \cite{Tri1967}, published in 1967.
Sometimes the numbers $n$ such that $f^k(n)$ is never a palindromic
number are called Lychrel's numbers (it is an acronym  of the name
Cheryl).

The first numbers that could be Lychrel's numbers are
\[
196, 295, 394, 493, 592, 689, 691, 788, 790, 879, 887, 978, 986,
\ldots
\]
In Figure \ref{fig:capicua} we plot the function $h$ that assigns to
each $n\in\N$ the minimum value  $h(n)\le1000$ such that $f^{h(n)}$
is a palindromic number, or 1000 when none of the first 1000
iterates is a palindromic number. I thank Antoni Guillamon for
sharing with me the Maple code that I have used to generate  this
figure. For the sake of clarity, we restrict the plot to the strip
$1\le h(n)\le 40.$ Its spikes correspond to the 13 values of the
above list. Notice also that for all the other values of $n,$ the
function $h(n)$ is at most $24.$

\begin{figure}[h]
    \centering\includegraphics[scale=0.7]{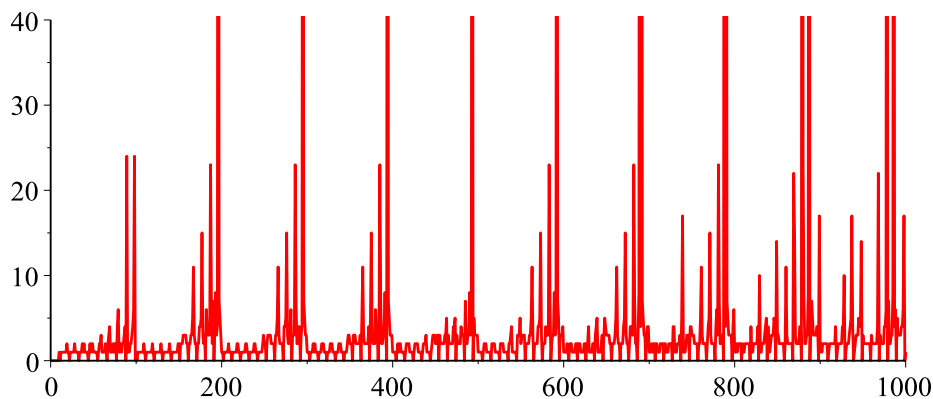}
\caption{Possible  Lychrel's numbers.}\label{fig:capicua}
\end{figure}

Although in basis 10 the existence of Lychrel's numbers is an open
problem, it is not difficult to find some of them in other basis.
For instance in basis 2, if we take
 $n= 10110_2$ is one of them. This is so, because $f^4(n)=
 10110100_2$,
 \[
10110_2\rightarrow 10110_2+01101_2=
100011_2\rightarrow1010100_2\rightarrow1101001_2\rightarrow
 10110100_2,
 \]
 $f^8(n)= 1011101000_2,$
 $f^{12}(n)=  101111010000_2,$ and in general (\cite{Bro1968}) after $4m$ iterates we arrive to a number that starts with 10,
 after has $m+1$ ones, then  $01$ and it ends with $m+1$ zeroes.

\caixa{
\begin{problem}[Singmaster's conjecture] There is a value $S\in\N$ such that any number different from $1$ appears in
the Pascal's triangle at most $S$ times.
\end{problem}}

This conjecture was proposed by  David Singmaster in 1971, see
\cite{Sin1971}. He already proved in 1975 that there are infinitely
many values that appear 6 times. One of them is 120,
 \[
120={120\choose1}={120\choose119}={16\choose2}={16\choose14}={10\choose3}={10\choose7}.
\]
 In fact, it holds that
 \[
m={m\choose 1}={m\choose m-1}={n+1\choose k+1}={n+1\choose
n-k}={n\choose k+2}={n\choose n-k-2},
 \]
 where for any $i\in\N,$ $n=F_{2i+2}F_{2i+3}-1,$
 $k=F_{2i}F_{2i+3}-1,$ $m={n+1\choose k+1},$ and $F_j$ is the $j$-th Fibonacci number,
 being $F_0=0$ and $F_1=1,$ see \cite{Sin1975}.
The only known number that appears  8 times is
 \[
3003={3003\choose1}={3003\choose3002}={78\choose2}={78\choose76}
={15\choose5}={15\choose10}={14\choose6}={14\choose8}.
 \]
Hence, if the conjecture holds, $S\ge8.$ It seems that
 Singmaster thought that $S$ could be $10$ or $12,$ although many people
 starts thinking that $S=8.$ Notice that any
$1<m\in\N$ appears  finitely many times because this value can only
appear in the first  $m+1$ files.

\subsection*{Acknowledgements} The author thanks Jos\'{e} Luis Bravo and Joan Torregrosa
for their feedback on previous versions of this paper.

This work has received funding from the Ministerio de Ciencia e
Innovaci\'{o}n (PID2019-104658GB-I00 grant) and the Ag\`{e}ncia de Gesti\'{o}
d'Ajuts Universitaris i de Recerca (2017 SGR 1617 grant).





\end{document}